\renewcommand{\orcid}[1]{\href{https://orcid.org/#1}{\textcolor[HTML]{A6CE39}{orcid.org/#1}}}
\setlist[enumerate]{leftmargin=.5in}
\setlist[itemize]{leftmargin=.5in}
\crefname{hypothesis}{Hypothesis}{Hypotheses}
\title{Towards stability of radial basis function based cubature formulas\thanks{\monthyeardate\today \corresponding{Jan Glaubitz (\email{Jan.Glaubitz@Dartmouth.edu}, \orcid{0000-0002-3434-5563})}
\funding{This work was partially supported by AFOSR \#F9550-18-1-0316 and ONR \#N00014-20-1-2595 (Glaubitz).}
\disclaimer{The views expressed in this academic research paper are those of the authors and do not reflect the official policy or position of the United States Government or Department of Defense.  In accordance with the Air Force Instruction 51-303, it is not copyrighted, but is the property of the United States government.}
}}
\author{Jan Glaubitz\thanks{Department of Mathematics, Dartmouth College, Hanover, NH 03755, USA}
\and
Jonah Reeger\thanks{Senior Research Mathematician, Sensors Directorate, Air Force Research Laboratory, Wright--Patterson Air Force Base, OH 45433, USA}
}
\DeclareMathOperator*{\argmin}{arg\,min}
\DeclareMathOperator*{\cond}{cond}
\DeclareMathOperator{\arcsinh}{arcsinh}
\newcommand{\scp}[2]{\left\langle{#1,\, #2}\right\rangle}
\newcommand{\intd}{\, \mathrm{d}}
\newcommand{\N}{\mathbb{N}}
\newcommand{\R}{\mathbb{R}}
\begin{document}

\maketitle

% REQUIRED
\begin{abstract}
	% 1) Why should people care? 
Cubature formulas (CFs) based on radial basis functions (RBFs) have become an important tool for multivariate numerical integration of scattered data. 
% 2) What is the specific research question? 
Although numerous works have been published on such RBF-CFs, their stability theory can still be considered as underdeveloped. 
% 3) What is done here?  
Here, we strive to pave the way towards a more mature stability theory for RBF-CFs. 
% 4) What are the key findings? 
In particular, we prove stability for RBF-CFs based on compactly supported RBFs under certain conditions on the shape parameter and the data points. 
Moreover, it is shown that asymptotic stability of many RBF-CFs is independent of polynomial terms, which are often included in RBF approximations. 
% 5) What are the implications of these findings? 
While our findings provide some novel conditions for stability of RBF-CFs, the present work also demonstrates that there are still many gaps to fill in future investigations. 
\end{abstract}

% REQUIRED
\begin{keywords}
  Numerical integration, radial basis functions, stability, cardinal functions, discrete orthogonal polynomials
\end{keywords}

% REQUIRED
\begin{AMS}
	65D30, 65D32, 65D05, 42C05
\end{AMS}

\section{Introduction} 
\label{sec:introduction} 

% 1) Why should people care? 
Numerical integration is an omnipresent task in mathematics and myriad applications. 
While these are too numerous to list fully, prominent examples include numerical differential equations \cite{hesthaven2007nodal,quarteroni2008numerical,ames2014numerical}, machine learning \cite{murphy2012machine}, finance \cite{glasserman2013monte}, and biology \cite{manly2006randomization}. 
In many cases, the problem can be formulated as follows.  
Let $\Omega \subset \R^D$ be a bounded domain with positive volume, $|\Omega| > 0$.
Given $N$ distinct data pairs $\{ (\mathbf{x}_n,f_n) \}_{n=1}^N \subset \Omega \times \R$ with $f: \Omega \to \R$ and $f_n := f(\mathbf{x}_n)$, the aim is to approximate the weighted integral 
\begin{equation}\label{eq:I}
  I[f] := \int_\Omega f(\boldsymbol{x}) \omega(\boldsymbol{x}) \intd \boldsymbol{x}
\end{equation}
by an \emph{$N$-point CF}. 
That is, by a weighted finite sum over the given function of the form 
\begin{equation}\label{eq:CR}
  C_N[f] = \sum_{n=1}^N w_n f(\mathbf{x}_n).
\end{equation} 
Here, the distinct points $\{ \mathbf{x}_n \}_{n=1}^N$ are called \emph{data points} and the $\{ w_n \}_{n=1}^N$ are referred to as \emph{cubature weights}. 
Many CFs are derived based on the idea to first approximate the (unknown) function $f$ and to exactly integrate this approximation then \cite{haber1970numerical,stroud1971approximate,engels1980numerical,cools1997constructing,krommer1998computational,cools2003encyclopaedia,krylov2006approximate,davis2007methods,brass2011quadrature,trefethen2017cubature}. 
Arguably, most of the existing CFs have been derived to be exact for polynomials up to a certain degree. 
See \cite{maxwell1877approximate,mysovskikh1980approximation,cools2001cubature,mysovskikh2001cubature,cools2003encyclopaedia,trefethen2021exactness}, in addition to the above references.  

% 2) What has been done already? 
That said, in recent years CFs based on the exact integration of RBFs have received a growing amount of interest \cite{sommariva2005integration,sommariva2006numerical,sommariva2006meshless,punzi2008meshless,aziz2012numerical,fuselier2014kernel,reeger2016numericalA,reeger2016numericalB,watts2016radial,reeger2018numerical,sommariva2021rbf}. 
The increased used of RBFs for numerical integration, as well as numerical differential equations \cite{kansa1990multiquadrics,iske1996structure,fasshauer1996solving,kansa2000circumventing,larsson2003numerical,iske2003radial,shu2007integrated,fornberg2015solving,flyer2016enhancing,glaubitz2021stabilizing,glaubitz2021towards}, seems to be only logical, considering their story of success in the last few decades. 
In fact, since their introduction in Hardy’s work \cite{hardy1971multiquadric} on cartography in 1971, RBFs have become a powerful tool in numerical analysis, including multivariate interpolation and approximation theory  \cite{buhmann2000radial,buhmann2003radial,wendland2004scattered,fasshauer2007meshfree,iske2011scattered,fornberg2015primer}.

% 3) What is the problem with what has already been done?  
Even though RBF-CFs have been proposed and applied in numerous works by now, their stability theory can still be considered as under-developed,
especially when compared to more traditional---e.\,g\ polynomial based---methods.
Stability of RBF-CFs was broached, for instance, in \cite{sommariva2005integration,sommariva2006numerical,punzi2008meshless}. 
However, to the best of our knowledge, an exhaustive stability theory for RBF-CFs is still missing in the literature. 
In particular, theoretical results providing clear conditions---e.\,g., on the kernel, the data points, the weight function, the degree of potentially added polynomial terms---under which stability of RBF-CFs is ensured are rarely encountered. 

% 4) What is done here? 
\subsection{Our Contribution} 

The present work strives to at least partially fill this gap in the RBF literature. 
This is done by providing a detailed theoretical and numerical investigation on stability of RBF-CFs for different families of kernels. 
These include, compactly supported and Gaussian RBFs as well as polyharmonic splines (PHS).  

% 5) What are the findings?  What are the implications and applications? 
In particular, we report on the following findings. 
(1) Stability of RBF-CFs is connected to the Lebesgue constant of the underlying RBF interpolant. 
Consequently, it is demonstrated that a low stability measure for RBF-CFs is promoted by a low Lebesgue constant. 
That said, it is also shown that in many cases RBF-CFs have significantly better stability properties than one might expect based on the underlying RBF interpolant. 
(2) We provide a provable sufficient condition for compactly supported RBFs to yield stable RBF-CF (see \cref{thm:main} in \cref{sec:compact}). 
The result is independent of the degree of the polynomial term that is included in the RBF interpolant and assumes the data points to come from an equidistributed (space-filling) sequence. 
This result is obtained by leveraging a beautiful connection to discrete orthogonal polynomials and is partially motivated by arguments that frequently occur in least-squares quadrature/cubature formulas \cite{huybrechs2009stable,migliorati2018stable,glaubitz2020stableQFs}.  
(3) At least numerically, we find the aforementioned sufficient condition to also be close to necessary in many cases. 
This might be considered as a discouraging result for compactly supported RBF-CFs since the sufficient condition makes some harsh restrictions on the shape parameter. 
(4) Finally, the asymptotic stability of pure RBF-CFs is connected to the asymptotic stability of the same RBF-CF but augmented with polynomials of a fixed arbitrary degree. 
Essentially, we are able to show that for a sufficiently large number of data points, stability of RBF-CFs is independent of the presence of polynomials in the RBF interpolant. 

While there are certainly further stability results desired, in addition to the ones presented here, we believe this work to be a valuable step towards a more mature stability theory for RBF-CFs.    

% Outline  
\subsection{Outline} 

The rest of this work is organized as follows. 
We start by collecting some preliminaries on RBF interpolants and CFs in \cref{sec:prelim}. 
In \cref{sec:stability} a few initial comments on stability of (RBF-)CFs are offered. 
Building up on these, it is demonstrated in \cref{sec:initial} that RBF-CFs in many cases have superior stability properties compared to RBF interpolation. 
Next, \cref{sec:compact} contains our theoretical main result regarding stability of RBF-CFs based on compactly supported kernels. 
Furthermore, in \cref{sec:connection} it is proven that, under certain assumptions, asymptotic stability of RBF-CFs is independent of the polynomial terms that might be included in the RBF interpolant. 
The aforementioned theoretical findings are accompanied by various numerical tests in  \cref{sec:numerical} 
Finally, concluding thoughts are offered in \cref{sec:summary}. 
\section{Preliminaries} 
\label{sec:prelim} 

We start by collecting some preliminaries on RBF interpolants (\cref{sub:prelim_RBFs}) as well as RBF-CFs (\cref{sub:prelim_CFs}).

\subsection{Radial Basis Function Interpolation}
\label{sub:prelim_RBFs} 

RBFs are often considered a powerful tool in numerical analysis, including multivariate interpolation and approximation theory \cite{buhmann2000radial,buhmann2003radial,wendland2004scattered,fasshauer2007meshfree,iske2011scattered,fornberg2015primer}.
In the context of the present work, we are especially interested in RBF interpolants. 
Let $f: \R^D \supset \Omega \to \R$ be a scalar valued function. 
Given a set of distinct \emph{data points} (in context of RBFs sometimes also referred to as \emph{centers}), the \emph{RBF interpolant} of $f$ is of the form 
\begin{equation}\label{eq:RBF-interpol}
  (s_{N,d}f)(\boldsymbol{x}) 
    = \sum_{n=1}^N \alpha_n \varphi( \varepsilon_{n} \| \boldsymbol{x} - \mathbf{x}_n \|_2 ) + \sum_{k=1}^K \beta_k p_k(\boldsymbol{x}).
\end{equation} 
Here, $\varphi: \R_0^+ \to \R$ is the \emph{RBF} (also called \emph{kernel}), $\{p_k\}_{k=1}^K$ is a basis of the space of all algebraic polynomials up to degree $d$, $\mathbb{P}_d(\Omega)$, and the $\varepsilon_{n}$'s are nonnegative shape parameters.
Furthermore, the RBF interpolant \cref{eq:RBF-interpol} is uniquely determined by the conditions 
\begin{alignat}{2}
	(s_{N,d}f)(\mathbf{x}_n) 
    		& = f(\mathbf{x}_n), \quad 
		&& n=1,\dots,N, \label{eq:interpol_cond} \\ 
  	\sum_{n=1}^N \alpha_n p_\mathbf{k}(\mathbf{x}_n) 
    		& = 0 , \quad 
		&& k=1,\dots,K. \label{eq:cond2}
\end{alignat} 
In this work, we shall focus on the popular choices of RBFs listed in \cref{tab:RBFs}. 
A more complete list of RBFs and their properties can be found in the monographs \cite{buhmann2003radial,wendland2004scattered,fasshauer2007meshfree,fornberg2015primer} and references therein.

\begin{remark}[Implementation of $\varphi(r) = r^{2k} \log r$]
The polyharmonic splines (PHS) of the form $\varphi(r) = r^{2k} \log r$ are usually implemented as $\varphi(r) = r^{2k-1} \log( r^r )$ to avoid numerical problems at $r=0$, where "$\log (0) = -\infty$".
\end{remark}

\begin{table}[t]
  \centering 
  \renewcommand{\arraystretch}{1.3}
  \begin{tabular}{c|c|c|c}
    RBF & $\varphi(r)$ & parameter & order \\ \hline 
    Gaussian & $\exp( -(\varepsilon r)^2)$ & $\varepsilon>0$ & 0 \\ 
%    Multiquadrics & $\sqrt{1 + (\varepsilon r)^2}$ & $\varepsilon>0$ & $1$ \\ 
%    Inverse quadrics & $\frac{1}{1 + (\varepsilon r)^2}$ & $\varepsilon>0$ & $1$ \\ 
    Wendland's & $\varphi_{D,k}(r)$, see \cite{wendland1995piecewise} & $D, k \in \N_0$ & 0 \\ 
    Polyharmonic splines & $r^{2k-1}$ & $k \in \N$ & $k$ \\ 
    & $r^{2k} \log r$ & $k \in \N$ & $k+1$ 
  \end{tabular} 
  \caption{Some popular RBFs}
  \label{tab:RBFs}
\end{table}

Note that \cref{eq:interpol_cond} and \cref{eq:cond2} can be reformulated as a linear system for the coefficient vectors $\boldsymbol{\alpha} = [\alpha_1,\dots,\alpha_N]^T$ and $\boldsymbol{\beta} = [\beta_1,\dots,\beta_K]^T$. 
This linear system is given by 
\begin{equation}\label{eq:system} 
	\begin{bmatrix} \Phi & P \\ P^T & 0 \end{bmatrix}
	\begin{bmatrix} \boldsymbol{\alpha} \\ \boldsymbol{\beta} \end{bmatrix} 
	= 
	\begin{bmatrix} \mathbf{f} \\ \mathbf{0} \end{bmatrix}
\end{equation} 
where $\mathbf{f} = [f(\mathbf{x}_1),\dots,f(\mathbf{x}_N)]^T$ as well as 
\begin{equation}\label{eq:Phi_P}
	\Phi = 
	\begin{bmatrix} 
		\varphi( \varepsilon_{1} \| \mathbf{x}_1 - \mathbf{x}_1 \|_2 ) & \dots & \varphi( \varepsilon_{N} \| \mathbf{x}_1 - \mathbf{x}_N \|_2 ) \\ 
		\vdots & & \vdots \\ 
		\varphi( \varepsilon_{1} \| \mathbf{x}_N - \mathbf{x}_1 \|_2 ) & \dots & \varphi( \varepsilon_{N} \| \mathbf{x}_N - \mathbf{x}_N \|_2 )
	\end{bmatrix}, 
	\quad 
	P = 
	\begin{bmatrix} 
		p_1(\mathbf{x}_1) & \dots & p_K(\mathbf{x}_1) \\ 
		\vdots & & \vdots \\  
		p_1(\mathbf{x}_N) & \dots & p_K(\mathbf{x}_N) 
	\end{bmatrix}.
\end{equation} 
It is well-known that \cref{eq:system} is ensured to have a unique solution---corresponding to existence and uniqueness of the RBF interpolant---if the kernel $\varphi$ is positive definite of order $m$ and the set of data points is $\mathbb{P}_{m}(\Omega)$-unisolvent. 
See, for instance, \cite[Chapter 7]{fasshauer2007meshfree} and \cite[Chapter 3.1]{glaubitz2020shock} or references therein. 
The set of all RBF interpolants \cref{eq:RBF-interpol} forms an $N$-dimensional linear space, denote by $\mathcal{S}_{N,d}$. 
This space is spanned by the basis elements 
\begin{equation}\label{eq:cardinal}
  	c_m(\boldsymbol{x}) 
    = \sum_{n=1}^N \alpha_n^{(m)} \varphi( \varepsilon_{n} \| \boldsymbol{x} - \mathbf{x}_n \|_2 ) + \sum_{k=1}^K \beta^{(m)}_k p_k(\boldsymbol{x}), 
    \quad m=1,\dots,N,
\end{equation}
that are uniquely determined by 
\begin{equation}\label{eq:cond_cardinal}
  c_m(\mathbf{x}_n) = \delta_{mn} := 
  \begin{cases} 
    1 & \text{if } m=n, \\ 
    0 & \text{otherwise}, 
  \end{cases} 
  \quad m,n=1,\dots,N,
\end{equation}
and condition \cref{eq:cond2}. 
The functions $c_m$ are the so-called \emph{cardinal functions}. 
They provide us with the following representation of the RBF interpolant \cref{eq:RBF-interpol}: 
\begin{equation}
	(s_{N,d})f(\boldsymbol{x}) = \sum_{n=1}^N f(\mathbf{x}_n) c_n(\boldsymbol{x})
\end{equation} 
This representation is convenient to subsequently derive cubature weights based on RBFs that are independent of the function $f$.

\subsection{Cubature Formulas Based on Radial Basis Functions} 
\label{sub:prelim_CFs} 

A fundamental idea behind many CFs is to first approximate the (unknown) functions $f: \Omega \to \R$ based on the given data pairs $\{\mathbf{x}_n,f_n\}_{n=1}^N \subset \Omega \times \R$ and to exactly integrate this approximation. 
In the case of RBF-CFs this approximation is chosen as the RBF interpolant \cref{eq:RBF-interpol}. 
Hence, the corresponding RBF-CF is defined as 
\begin{equation}\label{eq:RBF-CRs_def}
	C_N[f] := I[s_{N,d}f] = \int_{\Omega} (s_{N,d}f)(\boldsymbol{x}) \omega(\boldsymbol{x}) \intd \boldsymbol{x}. 
\end{equation}
When formulated w.\,r.\,t.\ the cardinal functions $c_n$, $n=1,\dots,N$, we get 
\begin{equation}\label{eq:RBF-CRs}
	C_N[f] = \sum_{n=1}^N w_n f(x_n) 
	\quad \text{with} \quad w_n = I[c_n]. 
\end{equation} 
That is, the RBF cubature weights $\mathbf{w}$ are given by the moments corresponding to the cardinal functions. 
This formulation is often preferred over \cref{eq:RBF-CRs_def} since the cubature weights $\mathbf{w}$ do not have to be recomputed when another function is considered. 
In our implementation, we compute the RBF cubature weights by solving the linear system 
\begin{equation}\label{eq:LS_weights}
	\underbrace{\begin{bmatrix} \Phi & P \\ P^T & 0 \end{bmatrix}}_{= A}  
	\begin{bmatrix} \mathbf{w} \\ \mathbf{v} \end{bmatrix} 
	= 
	\begin{bmatrix} \mathbf{m}^{\text{RBF}} \\ \mathbf{m}^{\text{poly}} \end{bmatrix},
\end{equation} 
where $\mathbf{v} \in \R^K$ is an auxiliary vector. 
Furthermore, the vectors ${\mathbf{m}^{\text{RBF}} \in \R^N}$ and ${\mathbf{m}^{\text{poly}} \in \R^K}$ contain the moments of the translated kernels and polynomial basis functions, respectively. 
That is, 
\begin{equation} 
\begin{aligned}
	\mathbf{m}^{\text{RBF}} & = \left[ I[\varphi_1], \dots, I[\varphi_N] \right]^T, \\ 
	\mathbf{m}^{\text{poly}} & = \left[ I[p_1], \dots, I[p_K] \right]^T,
\end{aligned}
\end{equation} 
with $\varphi_n(\boldsymbol{x}) = \varphi( \varepsilon_{n} \| \boldsymbol{x} - \mathbf{x}_n \|_2 )$. 
The moments of different RBFs can be found in \cref{sec:app_moments} and references listed there. 
The moments of polynomials for different domains $\Omega$ can be found in the literature, e.\,g., \cite[Appendix A]{glaubitz2020stableCFs} and \cite{folland2001integrate,lasserre2021simple}. 
\section{Stability and the Lebesgue Constant} 
\label{sec:stability}

In this section, we address stability of RBF interpolants and the corresponding RBF-CFs. 
In particular, we show that both can be estimated in terms of the famous Lebesgue constant. 
That said, we also demonstrate that RBF-CFs often come with improved stability compared to RBF interpolation.

\subsection{Stability and Accuracy of Cubature Formulas} 
\label{sub:stability_CFs}

% Error bound 
We start by addressing stability and accuracy of RBF-CFs. 
To this end, let us denote the best approximation of $f$ from $\mathcal{S}_{N,d}$ in the $L^\infty$-norm by $\hat{s}$. 
That is, 
\begin{equation}\label{eq:Lebesgue}
	\hat{s} = \argmin_{s \in \mathcal{S}_{N,d}} \norm{ f - s }_{L^{\infty}(\Omega)} 
	\quad \text{with} \quad 
	\norm{ f - s }_{L^{\infty}(\Omega)} = \sup_{\mathbf{x} \in \Omega} | f(\mathbf{x}) - s(\mathbf{x}) |. 
\end{equation} 
Note that this best approximation w.\,r.\,t.\ the $L^\infty$-norm is not necessarily equal to the RBF interpolant. 
Still, the following error bound holds for the RBF-CF \cref{eq:RBF-CRs}, that corresponds to exactly integrating the RBF interpolant from $\mathcal{S}_{N,d}$: 
\begin{equation}\label{eq:L-inequality} 
\begin{aligned}
	| C_N[f] - I[f] | 
		\leq \left( \| I \|_{\infty} + \| C_N \|_{\infty} \right) \inf_{ s \in \mathcal{S}_{N,d} } \norm{ f - s }_{L^{\infty}(\Omega)}
\end{aligned}
\end{equation} 
Inequality \cref{eq:L-inequality} is commonly known as the Lebesgue inequality; see, e.\,g., \cite{van2020adaptive} or \cite[Theorem 3.1.1]{brass2011quadrature}. 
It is most often encountered in the context of polynomial interpolation \cite{brutman1996lebesgue,ibrahimoglu2016lebesgue}, but straightforwardly carries over to numerical integration.
In this context, the operator norms $\| I \|_{\infty}$ and $\|C_N\|_{\infty}$ are respectively given by $\| I \|_{\infty} = I[1]$ and 
\begin{equation}\label{eq:stab_measure}
	\| C_N \|_{\infty} 
		= \sum_{n=1}^N |w_n| 
		= \sum_{n=1}^N | I[c_n] |.
\end{equation}
Recall that the $c_n$'s are the cardinal functions (see \cref{sub:prelim_RBFs}).

% Stability measure
In fact, $\| C_N \|_{\infty}$ is a common stability measure for CFs. 
This is because the propagation of input errors, e.\,g., due to noise or rounding errors, can be bounded as follows: 
\begin{equation}
	| C_N[f] - C_N[\tilde{f}] | 
		\leq \| C_N \|_{\infty} \| f - \tilde{f} \|_{L^\infty}
\end{equation} 
That is, input errors are amplified at most by a factor that is equal to the operator norm $\| C_N \|_{\infty}$. 
At the same time, we have a lower bound for $\| C_N \|_{\infty}$ given by 
\begin{equation}
	\| C_N \|_{\infty} 
		\geq C_N[1],
\end{equation} 
where equality holds if and only if all cubature weights are nonnegative. 
This is the reason for which the construction of CFs is mainly devoted to nonnegative CFs. 

\begin{definition}[Stability]
	We call the RBF-CF $C_N$ \emph{stable} if $\| C_N \|_{\infty} = C_N[1]$ holds. 
	This is the case if and only if $I[c_n] \geq 0$ for all cardinal functions $c_n$, $n=1,\dots,N$.
\end{definition}

It is also worth noting that $C[1] = \| I \|_{\infty}$ if the CF is exact for constants. 
For RBF-CFs, this is the case if at least constants are included in the underlying RBF interpolant ($d \geq 0$). 

% Two-fold goal 
Summarizing the above discussion originating from the Lebesgue inequality \cref{eq:L-inequality}, we have a two-fold goal when using RBF-CFs. 
On the one hand, the data points, the kernel, the shape parameter, and the basis of polynomials should be chosen such that $\mathcal{S}_{N,d}$ provides a best approximation to $f$ in the $L^\infty$-norm that is as accurate as possible. 
On the other hand, to ensure stability, $\|C_N\|_{\infty}$ should be as small as possible. 
That is, $I[c_n] \geq 0$ for all cardinal functions $c_n \in \mathcal{S}_{N,d}$.

\subsection{Stability of RBF Approximations} 
\label{sub:stability_RBFs}

We now demonstrate how the stability of RBF-CFs can be connected to the stability of the corresponding RBF interpolant. 
Indeed, the stability measure $\| C_N \|_{\infty}$ can be bounded from above by 
\begin{equation}
	\| C_N \|_{\infty} 
		\leq \| I \|_{\infty} \Lambda_N, 
		\quad \text{with} \quad 
		\Lambda_N := \sup_{\mathbf{x} \in \Omega} \sum_{n=1}^N | c_n(\mathbf{x}) |.
\end{equation}
Here, $\Lambda_N$ is the Lebesgue constant corresponding to the recovery process $f \mapsto s_{N,d}f$ (RBF interpolation). 
Obviously, $\Lambda_N \geq 1$. 
Note that if $1 \in \mathcal{S}_{N,d}$ (the RBF-CF is exact for constants), we therefore have 
\begin{equation}\label{eq:stab_eq1}
	\| I \|_{\infty} 
		\leq \| C_N \|_{\infty} 
		\leq \| I \|_{\infty} \Lambda_N.
\end{equation} 
Hence, the RBF-CF is stable ($\| C_N \|_{\infty} = \| I \|_{\infty}$) if $\Lambda_N$ is minimal ($\Lambda_N=1$).
We briefly note that the inequality $\| C_N \|_{\infty} \leq \| I \|_{\infty} \Lambda_N$ is sharp by considering the following \cref{ex:sharp}. 

\begin{example}[$\|C_N\|_{\infty} = \Lambda_N$]\label{ex:sharp} 
	Let us consider the one-dimensional domain $\Omega = [0,1]$ with $\omega \equiv 1$, which immediately implies $\| I \|_{\infty} = 1$.
	In \cite{bos2008univariate} it was shown that for the linear PHS $\varphi(r) = r$ and data points $0 = x_1 < x_2 < \dots < x_N = 1$ the corresponding cardinal functions $c_m$ are simple hat functions. 
	In particular, $c_m$ is the ordinary ``connect the dots'' piecewise linear interpolant of the data pairs $(x_n,\delta_{nm})$, $n=1,\dots,N$. 
	Thus, $\Lambda_N = 1$. 
	At the same time, this yields $\|C_N\|_{\infty} = 1$ and therefore  
	$\|C_N\|_{\infty} = \Lambda_N$. 
\end{example} 

Looking for minimal Lebesgue constants is a classical problem in recovery theory. 
For instance, it is well known that for polynomial interpolation even near-optimal sets of data points yield a Lebesgue constant that grows as $\mathcal{O}(\log N)$ in one dimension and as $\mathcal{O}(\log^2 N)$ in two dimensions; see \cite{brutman1996lebesgue,bos2006bivariate,bos2007bivariate,ibrahimoglu2016lebesgue} and references therein. 
In the case of RBF interpolation, the Lebesgue constant and appropriate data point distributions were studied in \cite{iske2003approximation,de2003optimal,mehri2007lebesgue,de2010stability} and many more works. 
That said, the second inequality in \cref{eq:stab_eq1} also tells us that in some cases we can expect the RBF-CF to have superior stability properties compared to the underlying RBF interpolant.
In fact, this might not come as a surprise since integration is well-known to have a smoothing (stabilizing) effect in a variety of different contexts. 
Finally, it should be stressed that \cref{eq:stab_eq1} only holds if $1 \in \mathcal{S}_{N,d}$. 
In general, 
\begin{equation}\label{eq:stab_eq2}
	C_N[1]  
		\leq \| C_N \|_{\infty} 
		\leq \| I \|_{\infty} \Lambda_N.
\end{equation} 
Still, this indicates that a recovery space $\mathcal{S}_{N,d}$ is desired that yields a small Lebesgue constant as well as the RBF-CF potentially having superior stability compared to RBF interpolation. 
\section{Theoretical Stability, Numerical Conditioning, and Robustness}
\label{sec:initial}

In this section, we report on two important observations. 
The first being that in many cases we find RBF-CFs to have superior stability properties compared to the corresponding RBF interpolants. 
That is, we show that most often a strict inequality, $\| C_N \|_{\infty} < \| I \|_{\infty} \Lambda_N$, holds for the second inequality in \cref{eq:stab_eq1}. 
Second, we emphasize the importance to distinguish between \emph{theoretical stability} (the CF having nonnegative weights only) and overall \emph{robustness} of the CF. 
The latter one is not just influenced by the theoretical stability---assuming infinite arithmetics---but also incorporates the effect of numerical conditioning. 
In particular, the cubature weights $\mathbf{w}$ are computed by numerically solving the linear system \cref{eq:LS_weights}. 
On a computer, this is always done in some finite arithmetic which inevitably results in rounding errors.
Such rounding errors can also propagate into the cubature weights $\mathbf{w}$ and, depending on the conditioning of the coefficient matrix $A$, might cause the RBF-CF to decrease in robustness. 
That said, our findings below indicate that despite the matrix $A$ often having potentially prohibitively high condition numbers, the numerical computation of the cubature weights $\mathbf{w}$ still yields accurate results for these. 
Henceforth, for sake of simplicity, we assume $\omega \equiv 1$.

\begin{figure}[t]
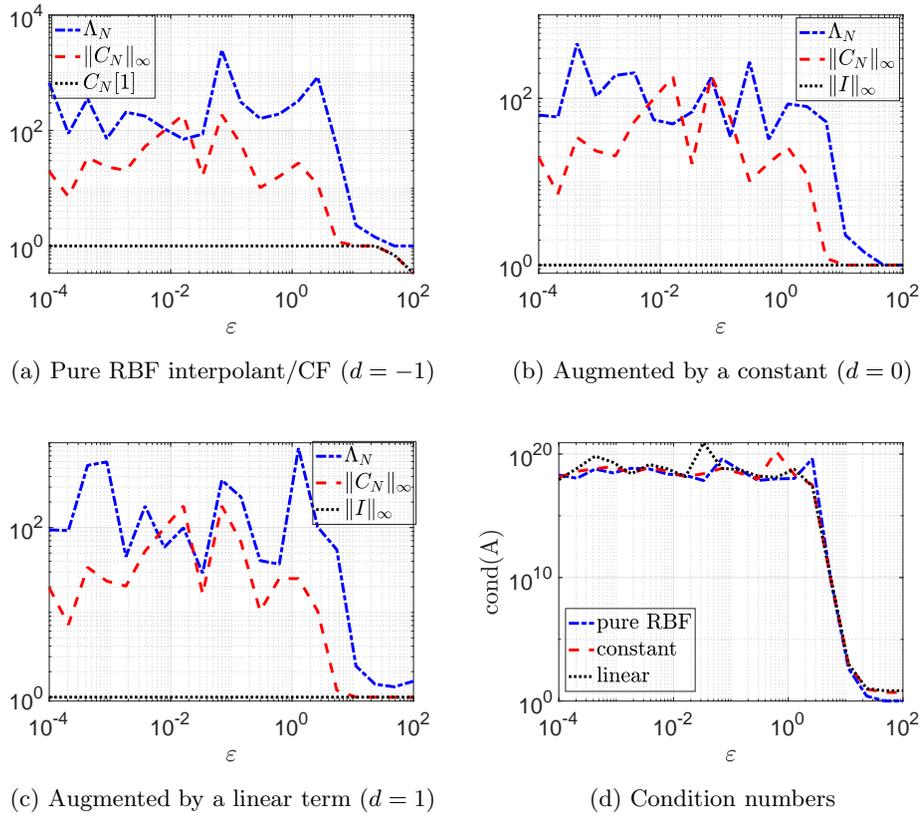

	\centering
  	\begin{subfigure}[b]{0.4\textwidth}
		\includegraphics[width=\textwidth]{%
      	plots/stab_G_N20_equid_noPol} 
    \caption{Pure RBF interpolant/CF ($d=-1$)}
    \label{fig:stab_G_noPol}
  	\end{subfigure}%
  	~
  	\begin{subfigure}[b]{0.4\textwidth}
		\includegraphics[width=\textwidth]{%
      	plots/stab_G_N20_equid_d0} 
    \caption{Augmented by a constant ($d=0$)}
    \label{fig:stab_G_d0}
  	\end{subfigure}%
  	\\ 
  	\begin{subfigure}[b]{0.4\textwidth}
		\includegraphics[width=\textwidth]{%
      	plots/stab_G_N20_equid_d1} 
    \caption{Augmented by a linear term ($d=1$)}
    \label{fig:stab_G_d1}
  	\end{subfigure}% 
	~
	\begin{subfigure}[b]{0.4\textwidth}
		\includegraphics[width=\textwidth]{%
      	plots/cond_G_N20_equid} 
    \caption{Condition numbers}
    \label{fig:stab_G_cond}
  	\end{subfigure}%
  	\caption{
  	A comparison of the stability measure $\|C_N\|_{\infty}$, the Lebesgue constant $\Lambda_N$, and the condition number $\cond(A)$ for the Gaussian kernel. 
  	$N=20$ equidistant data points were considered, while the shape parameter $\varepsilon$ was allowed to vary. 
	Note that for the pure RBF interpolant/CF ($d=-1$), the optimal stability measure is $C_N[1]$ rather than $\|I\|_{\infty} = 1$. 
  	}
  	\label{fig:stab_G}
\end{figure} 

We start by demonstrating that RBF-CFs in many cases can have superior stability properties compared to RBF interpolants. 
This is demonstrated in \cref{fig:stab_G} for $\Omega = [0,1]$ and a Gaussian kernel $\varphi(r) = \exp( - \varepsilon^2 r^2 )$. 
The corresponding RBF approximation was either augmented with no polynomial terms (\cref{fig:stab_G_noPol}), a constant term (\cref{fig:stab_G_d0}), or a linear term (\cref{fig:stab_G_d1}).
See the caption of \cref{fig:stab_G} for more details.
The following observations can be made based on the results presented in \cref{fig:stab_G}:  
(1) RBF-based integration can be distinctly more stable than RBF-based interpolation. 
This is indicated by the stability measure $\| C_N \|_{\infty}$ often being smaller than the Lebesgue constant $\Lambda_N$. 
(2) Finding stable (nonnegative) RBF-CFs is a nontrivial task. 
Even though, in the tests presented here, we can observe certain regions of stability w.\,r.\,t.\ the shape parameter $\varepsilon$, it is not clear how to theoretically quantify the boundary of this region. 
A first step towards such an analysis is presented in \cref{sec:compact} for compactly supported RBFs. 
Further results in this direction would be of great interest. 
(3) There are two potential sources for negative weights, causing $\| C_N \|_{\infty} > C_N[1]$ and the RBF-CF to become sensitive towards input errors. 
On one hand, this can be caused by one (or multiple) of the cardinal functions having a negative moment. 
This is what we previously referred to as ``theoretical instability".
On the other hand, negative weights might also be caused by numerical ill-conditioning by the coefficient matrix $A$ in the linear system \cref{eq:LS_weights} that is numerically solved to compute the cubature weights. 
In fact, we can observe such numerical ill-conditioning in \cref{fig:stab_G_noPol} and \cref{fig:stab_G_d0}. 
In these figures, we have $\| C_N \|_{\infty} > \|I\|_{\infty} \Lambda_N$ (note that $\|I\|_{\infty} = 1$) for $\varepsilon \approx 10^{-2}$. 
Theoretically---assuming error-free computations---this should not happen. 
In accordance with this, \cref{fig:stab_G_cond} illustrates that in all cases ($d=-1,0,1$) the condition number of the matrix $A$, $\cond(A)$, reaches the upper bound of (decimal) double precision arithmetics ($\approx 10^{16}$) for $\varepsilon$ close to $10^0$. 

\begin{remark}[The Uncertainty Principle for Direct RBF Methods]
	Severe ill-conditioning of $A$ for flat RBFs (small shape parameters $\varepsilon$) is a well-known phenomenon in the RBF community. 
	At the same time, one often finds that the best accuracy for an RBF interpolant is achieved when $\varepsilon$ is small. 
	This so-called \emph{uncertainty} or \emph{trade-off principle} of (direct) RBF methods was first formulated in \cite{schaback1995error}. 
	Unfortunately, it has contributed to a widespread misconception that numerical ill-conditioning is unavoidable for flat RBFs. 
	It should be stressed that the uncertainty principle is specific to the direct RBF approach \cite{driscoll2002interpolation,fornberg2004some,larsson2005theoretical,schaback2005multivariate}. 
	That is, when $A$ is formulated w.\,r.\,t.\ the basis consisting of the translated RBFs, as described in \cref{eq:Phi_P}.
	Indeed, by now, numerous works have demonstrated that severe ill-conditioning of $A$ for flat RBFs can be remedied by formulating $A$ and the linear system \cref{eq:LS_weights} w.\,r.\,t\ to certain more stable bases spanning the RBF space $\mathcal{S}_{N,d}$. 
	See \cite{muller2009newton,pazouki2011bases,fornberg2011stable,fasshauer2012stable,de2013new,fornberg2013stable,wright2017stable} and references therein. 
	However, it should be noted that the linear system \cref{eq:LS_weights} used to determine the cubature weights of the RBF-CF requires knowledge of the moments of the basis that is used to formulate $A$. 
	This might be a potential bottleneck for some of the above-listed approaches. 
	A detailed discussion of how the moments of stable bases of $\mathcal{S}_{N,d}$ can be determined would therefore be of interest. 
\end{remark}

\begin{figure}[t]
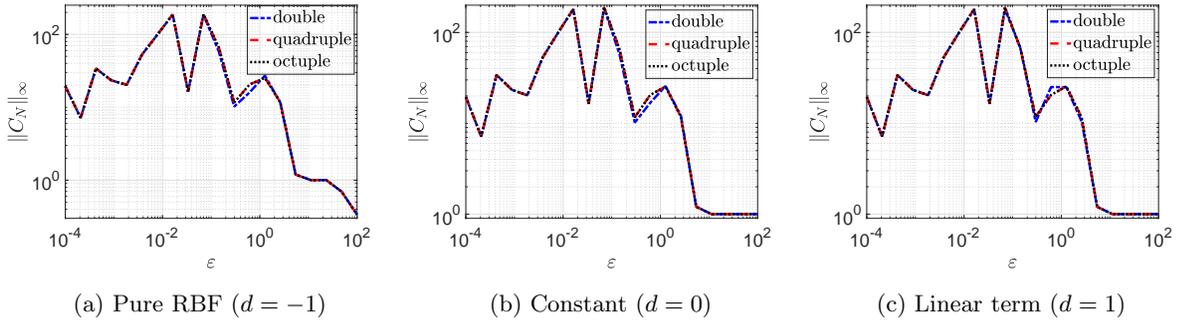

  \centering
  	\begin{subfigure}[b]{0.33\textwidth}
		\includegraphics[width=\textwidth]{%
      	plots/precision_G_N20_equid_noPol} 
    \caption{Pure RBF ($d=-1$)}
    \label{fig:precision_G_noPol}
  \end{subfigure}%
  ~
  	\begin{subfigure}[b]{0.33\textwidth}
		\includegraphics[width=\textwidth]{%
      	plots/precision_G_N20_equid_d0} 
    \caption{Constant ($d=0$)}
    \label{fig:precision_G_d0}
  \end{subfigure}%
  ~
    	\begin{subfigure}[b]{0.33\textwidth}
		\includegraphics[width=\textwidth]{%
      	plots/precision_G_N20_equid_d1} 
    \caption{Linear term ($d=1$)}
    \label{fig:precision_G_d1}
  \end{subfigure}%
  \caption{
  	Comparison of the stability measure $\|C_N\|_{\infty}$ for different computational precisions. 
	Considered are double (32 bits), quadruple (64 bits) and octuple (128 bits) precision. 
	In all cases $N=20$ equidistant data points and the Gaussian kernel were used. 
	The corresponding RBF interpolant either included no polynomial terms ($d=-1$), a constant ($d=0$) or a linear ($d=1$) term. 
	}
  \label{fig:precision_G}
\end{figure} 

The results presented in \cref{fig:stab_G} were obtained by the direct RBF method. 
One may therefore wonder to which extent the observed instabilities are influenced by numerical ill-conditioning. 
To address this question, we have repeated the same test with an increased computational precision using the function \emph{vpa} in MATLAB. 
\cref{fig:precision_G} provides a comparison of the stability measure $\|C_N\|_{\infty}$ computed by double (32 bits), quadruple (64 bits) and octuple (128 bits) precision. 
Despite $A$ being highly ill-conditioned, the results for quadruple precision might be considered as ``close" to the ones for usual double precision. 
In addition, further increasing the precision from quadruple to octuple precision does not seem to change the results---at least not by the naked eye. 
These results agree with the often reported observation that using stable solvers leads to useful results and well-behaved RBF interpolants even in the case of unreasonably large condition numbers. 
Indeed, we observe that the observed instabilities for RBF-CFs cannot be explained by numerical ill-conditioning alone. 
Rather, our results indicate that numerical ill-conditioning only amplifies already existing (theoretical) instabilities in the RBF-CF. 
\section{Compactly Supported Radial Basis Functions}
\label{sec:compact}

There is a rich body of literature on stability results for CFs based on (algebraic and trigonometric) polynomials, including \cite{haber1970numerical,stroud1971approximate,brass1977quadraturverfahren,engels1980numerical,cools1997constructing,krommer1998computational,krylov2006approximate,davis2007methods,brass2011quadrature} and the many references therein.
In comparison, provable results on the stability of RBF-CFs are rarely encountered in the literature, despite their increased use in applications. 
Here, our goal is to pave the way towards a more mature stability theory for these. 
As a first step in this direction, we next prove stability of RBF-CFs for compactly supported kernels with nonoverlapping supports. 
To be more precise, we subsequently consider RBFs $\varphi: \R_0^+ \to \R$ satisfying the following restrictions: 
\begin{enumerate}[label=(R\arabic*)] 
	\item \label{item:R1}
	$\varphi$ is nonnegative, i.\,e., $\varphi \geq 0$.
	
	\item \label{item:R2} 
	$\varphi$ is uniformly bounded. 
	W.\,l.\,o.\,g.\ we assume $\max_{r \in \R_0^+} |\varphi(r)| = 1$.
	
	\item \label{item:R3} 
	$\varphi$ is compactly supported. 
	W.\,l.\,o.\,g.\ we assume $\operatorname{supp} \varphi = [0,1]$.
	
\end{enumerate}
Already note that \ref{item:R3} implies $\operatorname{supp} \varphi_n = B_{\varepsilon_{n}^{-1}}(\mathbf{x}_n)$, where 
\begin{equation}
	B_{\varepsilon_{n}^{-1}}(\mathbf{x}_n) := \{ \, \mathbf{x} \in \Omega \mid \| \mathbf{x}_n - \mathbf{x} \|_2 \leq \varepsilon_{n}^{-1} \, \}, 
	\quad  
	\varphi_n(\boldsymbol{x}) := \varphi( \varepsilon_{n} \| \mathbf{x}_n - \boldsymbol{x} \|_2 ).
\end{equation}
Clearly, the $\varphi_n$'s will have nonoverlapping support if the shape parameters $\varepsilon_{n}$ are sufficiently large. 
This can be ensured by the following condition: 
\begin{equation}\label{eq:R4}
	\varepsilon_{n}^{-1} \leq h_{n} 
		:= \min\left\{ \, \| \mathbf{x}_n - \mathbf{x}_m \|_2 \mid \mathbf{x}_m \in X \setminus \{\mathbf{x}_n\} \, \right\}, 
		\quad n=1,\dots,N
\end{equation} 
Here, $X$ denotes the set of data points. 
The different basis functions having nonoverlapping support might seem to be a fairly restrictive sufficient condition. 
However, our numerical tests presented in \cref{sec:numerical} indicate that this condition does not seem to be ``far away" from being necessary as well. 
This might be considered as a discouraging result for the utility of compactly supported RBFs in the context of numerical integration. 
Finally, it should be pointed out that throughout this section, we assume $\omega \equiv 1$. 
This assumption is made for the main result, \cref{thm:main}, to hold. 
Its role will become clearer after consulting the proof of \cref{thm:main} and is revisited in \cref{rem:omega}.

\subsection{Main Results}
\label{sub:compact_main}

Our main result is the following \cref{thm:main}.
It states that RBF-CFs are conditionally stable for any polynomial degree $d \in \N$ if the number of (equidistributed) data points, $N$, is sufficiently larger than $d$. 

\begin{theorem}[Conditional Stability of RBF-CFs]\label{thm:main}
	Let $(\mathbf{x}_n)_{n \in \N}$ be an equidistributed sequence in $\Omega$ and $X_N = \{ \mathbf{x}_n \}_{n=1}^N$. 
	Furthermore, let $\omega \equiv 1$, let $\varphi: \R_0^+ \to \R$ be a RBF satisfying \ref{item:R1} to \ref{item:R3}, and choose the shape parameters $\varepsilon_n$ such that the corresponding functions $\varphi_n$ have nonoverlapping support and equal moments ($I[\varphi_n] = I[\varphi_m]$ for all $n,m=1,\dots,N$). 
	For every polynomial degree $d \in \N$ there exists an $N_0 \in \N$ such that for all $N \geq N_0$ the corresponding RBF-CF \cref{eq:RBF-CRs} is stable. 
	That is, $I[c_m] \geq 0$ for all $m=1,\dots,N$. 
\end{theorem}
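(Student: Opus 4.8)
The plan is to exploit the two structural hypotheses---nonoverlapping supports and equal moments---to collapse the RBF cubature weights onto a purely polynomial object, and then to analyze that object asymptotically via the equidistribution of the data points. First I would observe that because the $\varphi_n$ have nonoverlapping support, $\varphi_j(\mathbf{x}_i) = 0$ whenever $i \neq j$ (the center $\mathbf{x}_i$ lies outside $\operatorname{supp}\varphi_j$), so that the matrix $\Phi$ in \cref{eq:Phi_P} reduces to a positive multiple of the identity, $\Phi = \varphi(0) I_N$ with $\varphi(0) > 0$. Together with the equal-moment assumption, which gives $\mathbf{m}^{\mathrm{RBF}} = \mu \mathbf{1}$ for the common value $\mu := I[\varphi_n]$, the block system \cref{eq:LS_weights} can be solved explicitly. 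Eliminating $\mathbf{v}$ and using that $1 \in \mathbb{P}_d(\Omega)$ (so $\mathbf{1}$ lies in the column space of $P$ and is therefore fixed by the orthogonal projector $P(P^T P)^{-1}P^T$), the two quantities through which the kernel enters, $\varphi(0)$ and $\mu$, both cancel and one is left with
\begin{equation*}
  \mathbf{w} = P (P^T P)^{-1} \mathbf{m}^{\mathrm{poly}}.
\end{equation*}
This is precisely the minimum-norm set of weights that integrates $\mathbb{P}_d(\Omega)$ exactly, i.e.\ the polynomial least-squares cubature formula, which is exactly the connection to \cite{huybrechs2009stable,migliorati2018stable,glaubitz2020stableQFs} advertised earlier.

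Next I would recast this formula in terms of discrete orthonormal polynomials to expose the sign structure of the individual weights. Writing $P = Q R^{-1}$ with $Q^T Q = I_K$, the columns of $Q$ are the evaluations of polynomials $q_1,\dots,q_K$ that are orthonormal with respect to the discrete inner product $\langle f,g\rangle_N = \sum_{n=1}^N f(\mathbf{x}_n) g(\mathbf{x}_n)$, and a short computation gives $w_m = \sum_{k=1}^K q_k(\mathbf{x}_m)\, I[q_k]$. Rescaling to $\tilde q_k := \sqrt{N/|\Omega|}\, q_k$, which are orthonormal for the normalized inner product $(f,g)\mapsto \tfrac{|\Omega|}{N}\sum_{n=1}^N f(\mathbf{x}_n)g(\mathbf{x}_n)$, this becomes
\begin{equation*}
  w_m = \frac{|\Omega|}{N}\, g_N(\mathbf{x}_m), \qquad g_N := \sum_{k=1}^K \tilde q_k\, I[\tilde q_k] \in \mathbb{P}_d(\Omega).
\end{equation*}
It therefore suffices to prove that the single degree-$d$ polynomial $g_N$ is eventually positive on all of $\overline{\Omega}$, since this forces $w_m > 0$ simultaneously for every $m = 1,\dots,N$.

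Finally I would pass to the limit $N \to \infty$. By equidistribution (a Weyl-type argument), $\tfrac{|\Omega|}{N}\sum_n p(\mathbf{x}_n) \to \int_\Omega p \intd\mathbf{x}$ for every polynomial $p$, so the normalized discrete Gram matrix converges entrywise to the continuous Gram matrix of the monomials, which is positive definite because $|\Omega| > 0$. Continuity of the Gram--Schmidt (equivalently, Cholesky) map then yields $\tilde q_k \to \pi_k$ uniformly on the compact set $\overline{\Omega}$, where $\{\pi_k\}$ is the $L^2(\Omega)$-orthonormal basis of $\mathbb{P}_d(\Omega)$; consequently $g_N \to \sum_k \pi_k\, I[\pi_k]$ uniformly. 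The limit is the $L^2(\Omega)$-orthogonal projection of the constant $1$ onto $\mathbb{P}_d(\Omega)$, which equals $1$ since $1 \in \mathbb{P}_d(\Omega)$. Hence $g_N \to 1$ uniformly on $\overline{\Omega}$, and there is an $N_0$ (depending on $d$ through $K$) such that $g_N > 0$ on $\overline{\Omega}$, and thus $w_m > 0 \geq 0$, for all $N \geq N_0$.

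The main obstacle I anticipate is making the convergence $g_N \to 1$ \emph{uniform} rather than merely pointwise, since stability demands $w_m \geq 0$ at all $N$ data points at once and these become dense in $\Omega$. This hinges on two technical points: that the limiting Gram matrix is nonsingular (hence the discrete orthonormalization is well-defined and stable for large $N$, which is where $\mathbb{P}_d(\Omega)$-unisolvence of the equidistributed points enters), and that the finite-dimensional Gram--Schmidt process depends continuously on the Gram matrix, so that convergence of the moments lifts to uniform convergence of the $\tilde q_k$ on $\overline{\Omega}$. Everything else---the reduction to $\mathbf{w} = P(P^TP)^{-1}\mathbf{m}^{\mathrm{poly}}$ and the rewriting via discrete orthonormal polynomials---is essentially algebraic.
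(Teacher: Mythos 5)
Your proposal is correct and follows essentially the same route as the paper: both arguments show that the nonoverlap and equal-moment hypotheses cancel the kernel contribution entirely, reducing the weights to $w_m = \tfrac{|\Omega|}{N}\sum_{k} p_k^{(N)}(\mathbf{x}_m)\, I[p_k^{(N)}]$ with $\{p_k^{(N)}\}$ discretely orthonormal polynomials (the paper derives this via the cardinal-function representation of \cref{lem:rep_cm} rather than block elimination of \cref{eq:LS_weights}, but the computation is the same in disguise), and both conclude by the uniform convergence of these DOPs to the continuous $L^2(\Omega)$-orthonormal basis (\cref{lem:technical}) and the fact that the projection of $1$ onto $\mathbb{P}_d(\Omega)$ is $1$. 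Your explicit insistence that the convergence of $g_N$ to $1$ be uniform on $\overline{\Omega}$ is exactly the point the paper delegates to the $L^\infty$ convergence in \cref{lem:technical}.
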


The proof of \cref{thm:main} is given in \cref{sub:compact_proof} after collecting a few preliminarily results. 

Note that a sequence $(\mathbf{x}_n)_{n \in \N}$ is \emph{equidistributed in $\Omega$} if and only if 
\begin{equation}
	\lim_{N \to \infty} \frac{|\Omega|}{N} \sum_{n=1}^N g(\mathbf{x}_n) 
		= \int_{\Omega} g(\boldsymbol{x}) \intd \boldsymbol{x}
\end{equation}
holds for all measurable bounded functions $g: \Omega \to \R$ that are continuous almost everywhere (in the sense of Lebesgue), see \cite{weyl1916gleichverteilung}. 
For details on equidistributed sequences, we refer to the monograph \cite{kuipers2012uniform}.
Still, it should be noted that equidistributed sequences are dense sequences with a special ordering. 
In particular, if $(\mathbf{x}_n)_{n \in \N} \subset \Omega$ is equidistributed, then for every $d \in \N$ there exists an $N_0 \in \N$ such that $X_N$ is $\mathbb{P}_d(\Omega)$-unisolvent for all $N \geq N_0$; see \cite{glaubitz2021construction}. 
This ensures that the corresponding RBF interpolant is well-defined. 

It should also be noted that if $\Omega \subset \R^D$ is bounded and has a boundary of measure zero (again in the sense of Lebesgue), then an equidistributed sequence in $\Omega$ is induced by every equidistributed sequence in the $D$-dimensional hypercube. 
Since $\Omega$ is bounded, we can find an $R > 0$ such that $\Omega \subset [-R,R]^D$. 
Let $(\mathbf{y}_n)_{n \in \N}$ be an equidistributed sequence in $[-R,R]^D$.\footnote{Examples for such sequences include certain equidistant, (scaled and translated) Halton \cite{halton1960efficiency} or some other low-discrepancy points \cite{hlawka1961funktionen,niederreiter1992random,caflisch1998monte,dick2013high}.}  
Next, define $(\mathbf{x}_n)_{n \in \N}$ as the subsequence of $(\mathbf{y}_n)_{n \in \N} \subset [-R,R]^D$ that only contains the points inside of $\Omega$. 
It was shown in \cite{glaubitz2021construction} that this results in $(\mathbf{x}_n)_{n \in \N}$ being equidistributed in $\Omega$ if $\partial \Omega$ is of measure zero.

\subsection{Explicit Representation of the Cardinal Functions}
\label{sub:compact_explicit}

In preparation of proving \cref{thm:main} we derive an explicit representation for the cardinal functions $c_n$ under the restrictions \ref{item:R1} to \ref{item:R3} and \cref{eq:R4}. 
In particular, we make use of the concept of discrete orthogonal polynomials. 
Let us define the following discrete inner product corresponding to the data points $X_N = \{\mathbf{x}_n\}_{n=1}^N$: 
\begin{equation}\label{eq:discrete_scp}
	[u,v]_{X_N} = \frac{|\Omega|}{N} \sum_{n=1}^N u(\mathbf{x}_n) v(\mathbf{x}_n)
\end{equation} 
Recall that the data points $X_N$ are coming from an equidistributed sequence and are therefore ensured to be $\mathbb{P}_d(\Omega)$-unisolvent for any degree $d \in \N$ if a sufficiently large number of data points is used. 
In this case, \cref{eq:discrete_scp} is therefore ensured to be positive definite on $\mathbb{P}_d(\Omega)$. 
We say that the basis $\{p_k\}_{k=1}^K$ of $\mathbb{P}_d(\Omega)$, where $K = \dim \mathbb{P}_d(\Omega)$, consists of \emph{discrete orthogonal polynomials (DOPs)} if they satisfy 
\begin{equation}
	[p_k,p_l]_{X_N} = \delta_{kl} := 
	\begin{cases} 
		1 & \text{ if } k=l, \\ 
		0 & \text{ otherwise}, 
	\end{cases} 
	\quad k,l=1,\dots,K.
\end{equation} 
We now come to the desired explicit representation for the cardinal functions $c_m$. 

\begin{lemma}[Explicit Representation for $c_m$]\label{lem:rep_cm}
	Let the RBF $\varphi: \R_0^+ \to \R$ satisfy \ref{item:R2} and \ref{item:R3}. 
	Furthermore, choose the shape parameters $\varepsilon_n$ such that the corresponding functions $\varphi_n$ have nonoverlapping support and let the basis $\{p_k\}_{k=1}^K$ consists of DOPs. 
	Then, the cardinal function $c_m$, $m=1,\dots,N$, is given by 
	\begin{equation}\label{eq:rep_cm}
	\begin{aligned}
		c_m(\boldsymbol{x}) 
			= \varphi_m(\boldsymbol{x}) 
			- \frac{|\Omega|}{N} \sum_{n=1}^N \left( \sum_{k=1}^K p_k(\mathbf{x}_m) p_k(\mathbf{x}_n) \right) \varphi_n(\boldsymbol{x}) 
			+ \frac{|\Omega|}{N} \sum_{k=1}^K p_k(\mathbf{x}_m) p_k(\boldsymbol{x}). 
	\end{aligned}
	\end{equation} 
\end{lemma}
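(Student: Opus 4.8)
The plan is to exploit uniqueness of the cardinal functions. Since the block system \cref{eq:system} with $\mathbf{f}$ replaced by the $m$-th standard unit vector $\mathbf{e}_m$ (which encodes exactly the conditions $c_m(\mathbf{x}_n)=\delta_{mn}$ together with \cref{eq:cond2}) has a unique solution, it suffices to exhibit one pair of coefficient vectors $\boldsymbol{\alpha}^{(m)},\boldsymbol{\beta}^{(m)}$ solving it and then read off $c_m$ from \cref{eq:cardinal}. Rather than verify \cref{eq:rep_cm} blindly, I would rederive it by solving this system directly, which also makes the role of each hypothesis transparent.

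First I would use the two assumptions to trivialize the two matrices involved. By \ref{item:R3} together with the nonoverlapping-support condition \cref{eq:R4}, every center $\mathbf{x}_j$ with $j\neq n$ lies outside $\operatorname{supp}\varphi_n = B_{\varepsilon_n^{-1}}(\mathbf{x}_n)$, so $\varphi_n(\mathbf{x}_j)=\varphi(\varepsilon_n\|\mathbf{x}_n-\mathbf{x}_j\|_2)=0$, whereas $\varphi_n(\mathbf{x}_n)=\varphi(0)=1$ by the normalization \ref{item:R2}. Hence $\Phi = I_N$. Next, the DOP property $[p_k,p_l]_{X_N}=\delta_{kl}$ together with the definition \cref{eq:discrete_scp} gives $(P^{T}P)_{kl}=\sum_{n=1}^N p_k(\mathbf{x}_n)p_l(\mathbf{x}_n)=\tfrac{N}{|\Omega|}[p_k,p_l]_{X_N}=\tfrac{N}{|\Omega|}\delta_{kl}$, i.e.\ $P^{T}P=\tfrac{N}{|\Omega|}I_K$.

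With $\Phi=I_N$ the system for the $m$-th cardinal function reduces to $\boldsymbol{\alpha}^{(m)}+P\boldsymbol{\beta}^{(m)}=\mathbf{e}_m$ and $P^{T}\boldsymbol{\alpha}^{(m)}=\mathbf{0}$. Solving the first for $\boldsymbol{\alpha}^{(m)}=\mathbf{e}_m-P\boldsymbol{\beta}^{(m)}$ and inserting it into the second yields $P^{T}P\,\boldsymbol{\beta}^{(m)}=P^{T}\mathbf{e}_m$, so by the second simplification $\boldsymbol{\beta}^{(m)}=\tfrac{|\Omega|}{N}P^{T}\mathbf{e}_m$, that is $\beta_k^{(m)}=\tfrac{|\Omega|}{N}p_k(\mathbf{x}_m)$. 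Back-substituting gives $\alpha_n^{(m)}=\delta_{mn}-\tfrac{|\Omega|}{N}\sum_{k=1}^K p_k(\mathbf{x}_m)p_k(\mathbf{x}_n)$. Feeding these coefficients into \cref{eq:cardinal} reproduces \cref{eq:rep_cm} term by term, which finishes the argument.

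The computation is short, so the only genuine subtlety — and the step I would watch most carefully — is the identification $\varphi_n(\mathbf{x}_n)=\varphi(0)=1$ that produces $\Phi=I_N$. Hypothesis \ref{item:R2} only fixes $\max_r|\varphi(r)|=1$, so one must note that for the compactly supported kernels under consideration this maximum is attained at $r=0$, whence the normalization does give $\varphi(0)=1$; if instead $\varphi(0)=\gamma\neq 1$, one would obtain $\Phi=\gamma I_N$ and a stray factor $\gamma^{-1}$ would contaminate $\boldsymbol{\alpha}^{(m)}$, spoiling the clean form of \cref{eq:rep_cm}. Equivalently, one can skip the explicit solve and just check that the right-hand side of \cref{eq:rep_cm} satisfies \cref{eq:cond_cardinal} and \cref{eq:cond2}; in the latter the DOP property does the essential work, collapsing $\sum_{n}\alpha_n^{(m)}p_l(\mathbf{x}_n)$ to $p_l(\mathbf{x}_m)-p_l(\mathbf{x}_m)=0$.
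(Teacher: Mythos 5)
Your proposal is correct and follows essentially the same route as the paper: both use the nonoverlapping-support and normalization hypotheses to reduce $\Phi$ to the identity (i.e.\ $\varphi_n(\mathbf{x}_j)=\delta_{nj}$), express $\boldsymbol{\alpha}^{(m)}$ in terms of $\boldsymbol{\beta}^{(m)}$ from the cardinal conditions, and then invoke the DOP orthogonality to solve for $\beta_k^{(m)}=\tfrac{|\Omega|}{N}p_k(\mathbf{x}_m)$; your block-matrix phrasing is just a repackaging of the paper's componentwise computation. Your side remark that \ref{item:R2} strictly only fixes $\max_r|\varphi(r)|=1$ rather than $\varphi(0)=1$ is a fair observation the paper glosses over, but it does not change the argument for the kernels considered.
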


\begin{proof} 
	Let $m,n \in \{1,\dots,N\}$. 
	The restrictions \ref{item:R2}, \ref{item:R3} together with the assumption of the $\varphi_n$'s having nonoverlapping support yields $\varphi_n(\mathbf{x}_m) = \delta_{mn}$.
	Hence, \cref{eq:cardinal} and \cref{eq:cond_cardinal} imply 
	\begin{equation}\label{eq:alpha}
		\alpha_n^{(m)} = \delta_{mn} - \sum_{k=1}^K \beta^{(m)}_k p_k(\mathbf{x}_n).
	\end{equation} 
	If we substitute \cref{eq:alpha} into \cref{eq:cond2}, we get 
	\begin{equation} 
		p_l(\mathbf{x}_m) - \frac{N}{|\Omega|} \sum_{k=1}^K \beta^{(m)}_k [p_k,p_l]_{X_N} = 0, 
		\quad l=1,\dots,K. 
	\end{equation} 
	Thus, if $\{p_k\}_{k=1}^K$ consists of DOPs, this gives us 
	\begin{equation}\label{eq:beta}
		\beta^{(m)}_l = \frac{N}{|\Omega|} p_l(\mathbf{x}_m), \quad l=1,\dots,K.
	\end{equation} 
	Finally, substituting \cref{eq:beta} into \cref{eq:alpha} yields 
	\begin{equation} 
		\alpha_n^{(m)} = \delta_{mn} - \frac{N}{|\Omega|} \sum_{k=1}^K p_k(\mathbf{x}_m) p_k(\mathbf{x}_n) 
	\end{equation} 
	and therefore the assertion. 
\end{proof}

We already remarked that using a basis consisting of DOPs is not necessary for the implementation of RBF-CFs. 
In fact, the cubature weights are, ignoring computational considerations, independent of the polynomial basis elements w.\,r.\,t.\ which the matrix $P$ and the corresponding moments $\mathbf{m}^{\text{poly}}$ are formulated. 
We only use DOPs as a theoretical tool---a convenient perspective on the problem at hand\footnote{For example, many properties of interpolation polynomials are shown by representing these w.\,r.\,t.\ the Lagrange basis, while this representation is often not recommended for actual computations.}---to show stability of RBF-CFs.

\subsection{Some Low Hanging Fruits}
\label{sub:compact_low}

Using the explicit representation \cref{eq:rep_cm} it is trivial to prove stability of RBF-CFs when no polynomial term or only a constant is included in the RBF interpolant. 

\begin{lemma}[No Polynomials]
	Let the RBF $\varphi: \R_0^+ \to \R$ satisfy \ref{item:R1} to \ref{item:R3} and choose the shape parameters $\varepsilon_n$ such that the corresponding functions $\varphi_n$ have nonoverlapping support.
	Assume that no polynomials are included in the corresponding RBF interpolant ($K=0$). 
	Then, the associated RBF-CF is stable, i.e., $I[c_m] \geq 0$ for all $m=1,\dots,N$.
\end{lemma}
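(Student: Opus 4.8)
The plan is to specialize the explicit representation of \cref{lem:rep_cm} to the case $K=0$ and then invoke the nonnegativity restriction \ref{item:R1}, which did not yet enter in \cref{lem:rep_cm}. The hypotheses assumed here---restrictions \ref{item:R2} and \ref{item:R3} together with nonoverlapping supports---are precisely those of \cref{lem:rep_cm}, and the empty polynomial basis ($K=0$) trivially qualifies as a family of DOPs. Hence the representation \cref{eq:rep_cm} applies directly.

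First I would note that for $K=0$ both sums over $k$ in \cref{eq:rep_cm} are empty and therefore vanish, so the cardinal functions collapse to
\begin{equation*}
	c_m(\boldsymbol{x}) = \varphi_m(\boldsymbol{x}), \quad m=1,\dots,N.
\end{equation*}
Equivalently, with nonoverlapping supports the translated kernels already satisfy $\varphi_n(\mathbf{x}_m) = \delta_{mn}$ by \ref{item:R2} and \ref{item:R3}, so no polynomial correction is needed. Using $\omega \equiv 1$ and the definition \cref{eq:RBF-CRs} of the cubature weights, the weight attached to $\mathbf{x}_m$ is then
\begin{equation*}
	I[c_m] = I[\varphi_m] = \int_{\Omega} \varphi_m(\boldsymbol{x}) \intd \boldsymbol{x}.
\end{equation*}

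Finally, the nonnegativity of the kernel in \ref{item:R1} gives $\varphi_m \geq 0$ pointwise on $\Omega$, so the integral above is nonnegative and thus $I[c_m] \geq 0$ for every $m=1,\dots,N$, which is exactly the claimed stability. I expect no genuine obstacle in this argument---it is a true low-hanging fruit. The only point worth flagging is that restriction \ref{item:R1} (nonnegativity of $\varphi$) is used here for the first time, whereas \cref{lem:rep_cm} only relied on the normalization \ref{item:R2} and the compact support \ref{item:R3}.
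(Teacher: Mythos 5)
Your proposal is correct and matches the paper's own argument: both reduce to $c_m = \varphi_m$ (the paper states this as obvious, you justify it via \cref{lem:rep_cm} with $K=0$) and then conclude $I[c_m]\geq 0$ from the nonnegativity restriction \ref{item:R1}. No gaps.
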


\begin{proof} 
	It is obvious that $c_m(\boldsymbol{x}) = \varphi_m(\boldsymbol{x})$. 
	Thus, by restriction \ref{item:R1}, $c_m$ is nonnegative and therefore $I[c_m] \geq 0$.
\end{proof}

\begin{lemma}[Only a Constant]
	Let the RBF $\varphi: \R_0^+ \to \R$ satisfy \ref{item:R1} to \ref{item:R3} and choose the shape parameters $\varepsilon_n$ such that the corresponding functions $\varphi_n$ have nonoverlapping support.
	Assume that only a constant is included in the corresponding RBF interpolant ($d=0$ or $K=1$). 
	Then, the associated RBF-CF is stable, i.e., $I[c_m] \geq 0$ for all $m=1,\dots,N$.
\end{lemma}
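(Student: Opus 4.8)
The plan is to specialize the explicit representation of the cardinal functions from \cref{lem:rep_cm} to the case $K=1$ and then integrate. First I would pin down the single discrete orthogonal polynomial. Since $p_1$ spans the constants and must satisfy $[p_1,p_1]_{X_N}=1$, and the discrete inner product \cref{eq:discrete_scp} evaluates a constant $p_1 \equiv c$ as $[p_1,p_1]_{X_N} = \frac{|\Omega|}{N}\sum_{n=1}^N c^2 = |\Omega|\,c^2$, the normalization forces $p_1 \equiv |\Omega|^{-1/2}$.

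Next I would substitute $p_1(\mathbf{x}_m) = p_1(\mathbf{x}_n) = p_1(\boldsymbol{x}) = |\Omega|^{-1/2}$ into \cref{eq:rep_cm}. The inner sum collapses to $\sum_{k=1}^K p_k(\mathbf{x}_m)p_k(\mathbf{x}_n) = |\Omega|^{-1}$, and the final term becomes $\frac{|\Omega|}{N}\,|\Omega|^{-1} = \tfrac1N$, yielding the clean expression
\begin{equation*}
	c_m(\boldsymbol{x}) = \varphi_m(\boldsymbol{x}) - \frac{1}{N}\sum_{n=1}^N \varphi_n(\boldsymbol{x}) + \frac{1}{N}.
\end{equation*}
Applying the linear functional $I$ and using $I[1] = |\Omega|$ (valid since $\omega \equiv 1$ throughout this section) then gives
\begin{equation*}
	I[c_m] = I[\varphi_m] - \frac{1}{N}\sum_{n=1}^N I[\varphi_n] + \frac{|\Omega|}{N}.
\end{equation*}

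The crucial step, and what I expect to be the main (if modest) obstacle, is controlling the subtracted average of the moments without assuming the $\varphi_n$ share a common moment. Here I would exploit the nonoverlapping supports together with \ref{item:R1} and \ref{item:R2}: at almost every $\boldsymbol{x} \in \Omega$ at most one $\varphi_n$ is nonzero, and each is bounded by $1$, so $\sum_{n=1}^N \varphi_n(\boldsymbol{x}) \leq 1$ pointwise a.e. Integrating this partition-of-unity-type bound gives $\sum_{n=1}^N I[\varphi_n] = I\big[\sum_{n=1}^N \varphi_n\big] \leq I[1] = |\Omega|$. The only subtlety to verify is that the closed supports $B_{\varepsilon_n^{-1}}(\mathbf{x}_n)$ overlap at most on a set of measure zero, so the pointwise bound survives integration.

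Finally I would combine the two displays. Since $\frac1N\sum_{n=1}^N I[\varphi_n] \leq \frac{|\Omega|}{N}$, the last two terms cancel favorably and we obtain $I[c_m] \geq I[\varphi_m]$. By \ref{item:R1} the kernel is nonnegative, hence $I[\varphi_m] \geq 0$, and therefore $I[c_m] \geq 0$ for every $m = 1,\dots,N$, which is exactly the asserted stability. Everything apart from the measure-zero-overlap remark is direct substitution into \cref{lem:rep_cm}.
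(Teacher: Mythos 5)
Your proof is correct and follows essentially the same route as the paper: both specialize \cref{lem:rep_cm} with $p_1 \equiv |\Omega|^{-1/2}$ to obtain $c_m = \varphi_m + \tfrac{1}{N}\left(1 - \sum_{n} \varphi_n\right)$ and then invoke the partition-of-unity bound $\sum_{n} \varphi_n \leq 1$ stemming from \ref{item:R2}, \ref{item:R3}, and the nonoverlapping supports. The only immaterial difference is that the paper applies this bound pointwise to conclude $c_m \geq \varphi_m \geq 0$ before integrating, whereas you integrate first and then bound the averaged moments.
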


\begin{proof} 
	Let $m \in \{1,\dots,N\}$. 
	If we choose $p_1 \equiv |\Omega|^{-1/2}$, \cref{lem:rep_cm} yields 
	\begin{equation} 
		c_m(\boldsymbol{x}) 
			= \varphi_m(\boldsymbol{x}) 
			+ \frac{1}{N} \left( 1 - \sum_{n=1}^N \varphi_n(\boldsymbol{x}) \right). 
	\end{equation} 
	Note that by \ref{item:R2}, \ref{item:R3}, and \cref{eq:R4}, we therefore have $c_m(\boldsymbol{x}) \geq \varphi_m(\boldsymbol{x})$. 
	Hence, \ref{item:R1} implies the assertion. 
\end{proof}

\subsection{Proof of the Main Results}
\label{sub:compact_proof} 

The following technical lemma will be convenient to the proof of \cref{thm:main}. 

\begin{lemma}\label{lem:technical}
	Let $(\mathbf{x}_n)_{n \in \N}$ be equidistributed in $\Omega$, $X_N = \{ \mathbf{x}_n \}_{n=1}^N$, and let $[\cdot,\cdot]_{X_N}$ be the discrete inner product \cref{eq:discrete_scp}. 
	Furthermore, let $\{ p_k^{(N)} \}_{k=1}^K$ be a basis of $\mathbb{P}_d(\Omega)$ consisting of DOPs w.\,r.\,t.\ $[\cdot,\cdot]_{X_N}$. 
	Then, for all $k=1,\dots,K$, 
	\begin{equation}
		p_k^{(N)} \to p_k \quad \text{in } L^{\infty}(\Omega), \quad N \to \infty,
	\end{equation} 
	where $\{ p_k \}_{k=1}^K$ is a basis of $\mathbb{P}_d(\Omega)$ consisting of continuous orthogonal polynomials satisfying 
	\begin{equation} 
		\int_{\Omega} p_k(\boldsymbol{x}) p_l(\boldsymbol{x}) \intd \boldsymbol{x} 
			= \delta_{kl}, \quad k,l=1,\dots,K.
	\end{equation}
	Moreover, it holds that 
	\begin{equation} 
		\lim_{N \to \infty} \int_{\Omega} p_k^{(N)}(\boldsymbol{x}) p_l^{(N)}(\boldsymbol{x}) \intd \boldsymbol{x} = \delta_{kl}, \quad k,l=1,\dots,K.
	\end{equation}
\end{lemma}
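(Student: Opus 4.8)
The plan is to reduce the entire statement to a single finite-dimensional fact---convergence of Gram matrices---combined with the continuity of the Gram--Schmidt (Cholesky) orthonormalization map. First I would fix, once and for all, a convenient reference basis $\{m_k\}_{k=1}^K$ of $\mathbb{P}_d(\Omega)$, say the monomials, and expand every discrete orthonormal polynomial as $p_k^{(N)} = \sum_{j=1}^K T^{(N)}_{kj} m_j$ and every continuous one as $p_k = \sum_{j=1}^K T_{kj} m_j$, with matrices $T^{(N)}, T \in \R^{K \times K}$. The whole lemma then becomes the assertion that $T^{(N)} \to T$ entrywise, from which both displayed limits follow immediately.

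Next I would establish convergence of the associated Gram matrices. Setting
\[
	G^{(N)}_{kl} = [m_k, m_l]_{X_N},
	\qquad
	G_{kl} = \scp{m_k}{m_l} = \int_\Omega m_k(\boldsymbol{x}) m_l(\boldsymbol{x}) \intd \boldsymbol{x},
\]
each product $m_k m_l$ is a polynomial, hence bounded and continuous on the bounded domain $\Omega$. The defining property of an equidistributed sequence therefore yields $G^{(N)}_{kl} \to G_{kl}$ for every pair $(k,l)$, i.e.\ $G^{(N)} \to G$ entrywise. The limit $G$ is symmetric positive definite, since $|\Omega| > 0$ forces a nonzero polynomial not to vanish almost everywhere, so the $m_k$ are linearly independent in $L^2(\Omega)$. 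Moreover, as noted after \cref{thm:main}, for all sufficiently large $N$ the point set $X_N$ is $\mathbb{P}_d(\Omega)$-unisolvent, so $[\cdot,\cdot]_{X_N}$ is positive definite on $\mathbb{P}_d(\Omega)$ and $G^{(N)}$ is positive definite; in particular the DOPs, and hence $T^{(N)}$, are well-defined for all such $N$.

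The orthonormality conditions read $T^{(N)} G^{(N)} (T^{(N)})^{T} = I$ and $T G T^{T} = I$ in matrix form. These do not determine the factors uniquely, so I would impose the Gram--Schmidt normalization (triangular with positive diagonal), under which $G \mapsto T$ is the inverse transpose of the Cholesky factor and is a continuous---indeed real-analytic---map on the open cone of symmetric positive definite matrices. Applying this continuity to $G^{(N)} \to G$ gives $T^{(N)} \to T$. Since the $m_j$ are fixed and bounded on $\Omega$,
\[
	\norm{ p_k^{(N)} - p_k }_{L^\infty(\Omega)}
	\leq \sum_{j=1}^K \abs{ T^{(N)}_{kj} - T_{kj} } \, \norm{ m_j }_{L^\infty(\Omega)}
	\to 0,
\]
which is the first claim; and because $\Omega$ is bounded, $L^\infty$-convergence implies $L^2$-convergence, so continuity of the inner product gives
\[
	\int_\Omega p_k^{(N)}(\boldsymbol{x}) p_l^{(N)}(\boldsymbol{x}) \intd \boldsymbol{x}
	= \scp{ p_k^{(N)} }{ p_l^{(N)} }
	\to \scp{ p_k }{ p_l }
	= \delta_{kl},
\]
the second claim.

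The main obstacle is not any single estimate but making the convergence statement well-posed. DOPs are defined only up to signs (more generally, up to an orthogonal transformation mixing basis polynomials), so $p_k^{(N)} \to p_k$ can hold only once a consistent normalization is fixed on both the discrete and the continuous side; the Gram--Schmidt convention is exactly what renders $G \mapsto T$ single-valued and continuous, and without it the claim is false. Guaranteeing that positive definiteness survives in the limit---so that the Cholesky map stays within its domain of continuity---is the other point demanding care; both are handled by the unisolvence and $|\Omega| > 0$ arguments above, after which everything is routine.
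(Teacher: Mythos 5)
Your argument is correct, but a direct comparison is not really possible: the paper's entire ``proof'' of \cref{lem:technical} is a one-line deferral to the results of \cite{glaubitz2020stableCFs}, whereas you supply a complete, self-contained derivation. Your route---expand both the discrete and continuous orthonormal polynomials in a fixed monomial basis, show entrywise convergence of the Gram matrices via the Weyl characterization of equidistribution (products of polynomials are bounded and continuous on the bounded $\Omega$, so the defining limit applies), secure positive definiteness of $G^{(N)}$ for large $N$ from the eventual $\mathbb{P}_d(\Omega)$-unisolvence of $X_N$ and of $G$ from $|\Omega|>0$, and then invoke continuity of the Cholesky-based orthonormalization map on the SPD cone---is the standard argument one would expect to underlie the cited reference, and every step checks out. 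You also flag something the paper's statement silently glosses over: DOPs are only determined up to an orthogonal mixing (at minimum, up to signs), so the convergence $p_k^{(N)} \to p_k$ is only well-posed once a consistent normalization (e.g., Gram--Schmidt with positive leading coefficients in a fixed ordering) is imposed on both sides. That caveat is genuinely needed for the lemma to be literally true, and it is worth noting that the way \cref{lem:technical} is actually used in the proof of \cref{thm:main} only requires the normalization-independent consequences (namely $p_1^{(N)} \equiv |\Omega|^{-1/2}$ and the third displayed limit), so the gap is in the lemma's phrasing rather than in the downstream application. What your self-contained version buys is transparency about exactly which hypotheses (equidistribution, boundedness of $\Omega$, eventual unisolvence) enter where; what the paper's citation buys is brevity.
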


\begin{proof}
	The assertion is a direct consequence of the results from \cite{glaubitz2020stableCFs}. 
\end{proof}

Essentially, \cref{lem:technical} states that if a sequence of discrete inner product converges to a continuous one, then also the corresponding DOPs---assuming that the ordering of the elements does not change---converges to a basis of continuous orthogonal polynomials. 
Furthermore, this convergence also holds in a uniform sense. 
We are now able to provide a proof for \cref{thm:main}. 

\begin{proof}[Proof of \cref{thm:main}]
	Let $d \in \N$ and $m \in \{1,\dots,N\}$. 
	Under the assumptions of \cref{thm:main}, we have $I[\varphi_n] = I[\varphi_m]$ for all $n=1,\dots,N$. 
	Thus, \cref{lem:rep_cm} implies  
	\begin{equation}
		I[c_m] 
			= I[\varphi_m] \left[ 1 - \frac{|\Omega|}{N} \sum_{n=1}^N \sum_{k=1}^K p^{(N)}_k(\mathbf{x}_m) p^{(N)}_k(\mathbf{x}_{n}) \right] 
			+ \frac{|\Omega|}{N} \sum_{k=1}^K p^{(N)}_k(\mathbf{x}_m) I[p_k]. 
	\end{equation} 
	Let $\{ p_k^{(N)} \}_{k=1}^K$ be a basis of $\mathbb{P}_d(\Omega)$ consisting of DOPs. 
	That is, $[p_k^{(N)},p_l^{(N)}]_{X_N} = \delta_{kl}$. 
	In particular, $p_1^{(N)} \equiv |\Omega|^{-1/2}$. 
	With this in mind, it is easy to verify that 
	\begin{equation}\label{eq:omega_proof1} 
	\begin{aligned} 
		\frac{|\Omega|}{N} \sum_{n=1}^N \sum_{k=1}^K p^{(N)}_k(\mathbf{x}_m) p^{(N)}_k(\mathbf{x}_n) 
		= \sum_{k=1}^K p^{(N)}_k(\mathbf{x}_m) |\Omega|^{1/2} [p^{(N)}_k,p^{(N)}_1]_{X_N}
		= 1. 
	\end{aligned}
	\end{equation} 
	Thus, we have 
	\begin{equation} 
		I[c_m] \geq 0 \iff 
			\sum_{k=1}^K p_k^{(N)}(\mathbf{x}_m) I[p_k^{(N)}] \geq 0.
	\end{equation} 
	Finally, observe that 
	\begin{equation}
		\sum_{k=1}^K p_k^{(N)}(\mathbf{x}_m) I[p_k^{(N)}] 
			= |\Omega|^{1/2} \sum_{k=1}^K p_k^{(N)}(\mathbf{x}_m) \int_{\Omega} p_k^{(N)}(\boldsymbol{x}) p_1^{(N)}(\boldsymbol{x}) \intd \boldsymbol{x}, 
	\end{equation} 
	under the assumption that $\omega \equiv 1$. 
	\cref{lem:technical} therefore implies 
	\begin{equation}\label{eq:omega_proof2} 
		\lim_{N \to \infty} \sum_{k=1}^K p_k^{(N)}(\mathbf{x}_m) I[p_k^{(N)}] = 1, 
	\end{equation}
	which completes the proof. 
\end{proof} 

\begin{remark}[On the Assumption that $\omega \equiv 1$]\label{rem:omega}
	The assumption that $\omega \equiv 1$ in \cref{thm:main} is necessary for \cref{eq:omega_proof1} and \cref{eq:omega_proof2} to both hold true. 
	On the one hand, \cref{eq:omega_proof1} is ensured by the the DOPs being orthogonal w.\,r.\,t.\ the discrete inner product \cref{eq:discrete_scp}. 
	This discrete inner product can be considered as an approximation to the continuous inner product ${\scp{u}{v} = \int_{\Omega} u(\boldsymbol{x}) v(\boldsymbol{x}) \intd \boldsymbol{x}}$. 
	This also results in \cref{lem:technical}. 
	On the other hand, in general, \cref{eq:omega_proof2} only holds if the DOPs converge to a basis of polynomials that is orthogonal w.\,r.\,t.\ the weighted continuous inner product ${\scp{u}{v}_{\omega} = \int_{\Omega} u(\boldsymbol{x}) v(\boldsymbol{x}) \omega(\boldsymbol{x}) \intd \boldsymbol{x}}$. 
	Hence, for \cref{eq:omega_proof1} and \cref{eq:omega_proof2} to both hold true at the same time, we have to assume that $\omega \equiv 1$. 
	In this case, the two continuous inner products are the same.
\end{remark}

\section{On the Connection Between RBF-CFs With and Without Polynomials} 
\label{sec:connection} 

A natural question in the context of RBFs is which influence the polynomial terms have on the quality of the RBF interpolation and the RBF-CF, beyond ensuring existence of the RBF interpolant. 
In particular, in the context of the present work, one might ask ``how are polynomial terms influencing stability of the RBF-CF?".
In what follows, we address this question by showing that---under certain assumptions that are to be specified yet---at least asymptotic stability of RBF-CFs is independent of polynomial terms. 
We hope this result to be another step forward towards a more mature stability theory for RBF-CFs. 

Recently, the following explicit formula for the cardinal functions was derived in \cite{bayona2019insight,bayona2019comparison}. 
Let us denote ${\mathbf{c}(\boldsymbol{x}) = [c_1(\boldsymbol{x}),\dots,c_N(\boldsymbol{x})]^T}$, where $c_1,\dots,c_N$ are the cardinal functions spanning $\mathcal{S}_{N,d}$; see \cref{eq:cardinal} and \cref{eq:cond_cardinal}. 
Provided that $\Phi$ and $P$ in \cref{eq:Phi_P} have full rank\footnote{
$P$ having full rank means that $P$ has full column rank, i.\,e., the columns of $P$ are linearly independent. 
This is equivalent to the set of data points being $\mathbb{P}_d(\Omega)$-unisolvent. 
}, 
\begin{equation}\label{eq:formula_Bayona}
	\mathbf{c}(\boldsymbol{x}) 
		= \hat{\mathbf{c}}(\boldsymbol{x}) 
		- B \boldsymbol{\tau}(\boldsymbol{x})
\end{equation} 
holds. 
Here, $\hat{\mathbf{c}}(\boldsymbol{x}) = [\hat{c}_1(\boldsymbol{x}),\dots,\hat{c}_N(\boldsymbol{x})]^T$ are the cardinal functions corresponding to the pure RBF interpolation without polynomials. 
That is, they span $\mathcal{S}_{N,-1}$. 
At the same time, $B$ and $\boldsymbol{\tau}$ are defined as 
\begin{equation} 
	B := \Phi^{-1} P \left( P^T \Phi^{-1} P \right)^{-1}, 
	\quad 
	\boldsymbol{\tau}(\boldsymbol{x}) := P^T \hat{\mathbf{c}}(\boldsymbol{x}) - \mathbf{p}(\boldsymbol{x})
\end{equation} 
with ${\mathbf{p}(\boldsymbol{x}) = [p_1(\boldsymbol{x}),\dots,p_K(\boldsymbol{x})]^T}$. 
Note that $\boldsymbol{\tau}$ can be interpreted as a residual measuring how well pure RBFs can approximate polynomials up to degree $d$. 
Obviously, \cref{eq:formula_Bayona} implies 
\begin{equation}\label{eq:relation_weights}
	\mathbf{w} 
		= \hat{\mathbf{w}} - B I[\boldsymbol{\tau}],
\end{equation} 
where $\mathbf{w}$ is the vector of cubature weights of the RBF-CF with polynomials ($d \geq 0$). 
At the same time, $\hat{\mathbf{w}}$ is the vector of weights corresponding to the pure RBF-CF without polynomial augmentation ($d=-1$). 
Moreover, $I[\boldsymbol{\tau}]$ denotes the componentwise application of the integral operator $I$.
It was numerically demonstrated in \cite{bayona2019insight} that for fixed $d \in \N$ 
\begin{equation}\label{eq:observation_Bayona}
	\max_{\boldsymbol{x} \in \Omega} \| B \boldsymbol{\tau}(\boldsymbol{x}) \|_{\ell^\infty} \to 0 
	\quad \text{as} \quad N \to \infty
\end{equation}
if PHS are used. 
Note that, for fixed $\boldsymbol{x} \in \Omega$, $B \boldsymbol{\tau}(\boldsymbol{x})$ is an $N$-dimensional vector and $\| B \boldsymbol{\tau}(\boldsymbol{x}) \|_{\ell^\infty}$ denotes its $\ell^{\infty}$-norm. 
That is, the maximum absolute value of the $N$ components. 
It should be pointed out that while \cref{eq:observation_Bayona} was numerically demonstrated only for PHS the relations \cref{eq:formula_Bayona} and \cref{eq:relation_weights} hold for general RBFs, assuming that $\Phi$ and $P$ have full rank.
Please see \cite[Section 4]{bayona2019insight} for more details. 
We also remark that \cref{eq:observation_Bayona} implies the weaker statement 
\begin{equation}\label{eq:cond}
	\| B \boldsymbol{\tau}(\cdot) \|_{\ell^1} \to 0 
	\ \ \text{in } L^1(\Omega) \quad \text{as} \quad N \to \infty.
\end{equation} 
Here, $B \boldsymbol{\tau}(\cdot)$ denotes a vector-valued function, $B \boldsymbol{\tau}: \Omega \to \R^N$. 
That is, for a fixed argument $\boldsymbol{x} \in \Omega$, $B \boldsymbol{\tau}(\boldsymbol{x})$ is an $N$-dimensional vector in $\R^N$ and $\| B \boldsymbol{\tau}(\boldsymbol{x}) \|_{\ell^1}$ denotes the usual $\ell^1$-norm of this vector.  
Thus, \cref{eq:cond} means that the integral of the $\ell^1$-norm of the vector-valued function $B \boldsymbol{\tau}(\cdot)$ converges to zero as $N \to \infty$.
The above condition is not just weaker than \cref{eq:observation_Bayona} (see \cref{rem:assumption2}), but also more convenient to investigate stability of CFs. 
Indeed, we have the following results.

\begin{lemma}\label{lem:connection} 
	Let $\omega \in L^{\infty}(\Omega)$.
	Assume $\Phi$ and $P$ in \cref{eq:Phi_P} have full rank and assume \cref{eq:cond} to hold. 
	Then the two following statements are equivalent: 
	\begin{enumerate} 
		\item[(a)] 
		$\| \hat{\mathbf{w}} \|_{\ell^1} \to \|I\|_{\infty}$ for $N \to \infty$ 
		
		\item[(b)]
		$\| \mathbf{w} \|_{\ell^1} \to \|I\|_{\infty}$ for $N \to \infty$
	\end{enumerate} 
	That is, either both the pure and polynomial augmented RBF-CF are asymptotically stable or none is. 
\end{lemma}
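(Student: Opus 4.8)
The plan is to read (a) and (b) as two asymptotic statements about the $\ell^1$-norms of the polynomial-augmented and pure weight vectors, and to show that these two norms differ by a quantity that vanishes as $N \to \infty$; the equivalence then follows trivially. The natural starting point is the exact identity \cref{eq:relation_weights}, $\mathbf{w} = \hat{\mathbf{w}} - B I[\boldsymbol{\tau}]$, which couples the two weight vectors directly. Applying the reverse triangle inequality for the $\ell^1$-norm, I would immediately obtain
\begin{equation*}
	\left| \, \| \mathbf{w} \|_{\ell^1} - \| \hat{\mathbf{w}} \|_{\ell^1} \, \right|
		\leq \| B I[\boldsymbol{\tau}] \|_{\ell^1},
\end{equation*}
so the entire lemma reduces to proving that the right-hand side converges to zero. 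Since the common target value $\| I \|_{\infty}$ is the same in (a) and (b), once $\| B I[\boldsymbol{\tau}] \|_{\ell^1} \to 0$ is established, the convergence of one norm to $\| I \|_{\infty}$ forces the convergence of the other.

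The heart of the argument is therefore the estimate of $\| B I[\boldsymbol{\tau}] \|_{\ell^1}$ in terms of assumption \cref{eq:cond}. First I would use that $I$ acts componentwise and that $B$ is constant in $\boldsymbol{x}$, so by linearity of the integral $B I[\boldsymbol{\tau}] = I[B\boldsymbol{\tau}]$. Writing each component as an integral and pulling the absolute value inside then gives
\begin{equation*}
	\| B I[\boldsymbol{\tau}] \|_{\ell^1}
		= \sum_{n=1}^N \left| \int_{\Omega} (B\boldsymbol{\tau})_n(\boldsymbol{x}) \, \omega(\boldsymbol{x}) \intd \boldsymbol{x} \right|
		\leq \int_{\Omega} \| B\boldsymbol{\tau}(\boldsymbol{x}) \|_{\ell^1} \, |\omega(\boldsymbol{x})| \intd \boldsymbol{x}.
\end{equation*}
Since $\omega \in L^{\infty}(\Omega)$, I would bound $|\omega(\boldsymbol{x})|$ by $\| \omega \|_{L^\infty(\Omega)}$ almost everywhere, obtaining
\begin{equation*}
	\| B I[\boldsymbol{\tau}] \|_{\ell^1}
		\leq \| \omega \|_{L^\infty(\Omega)} \int_{\Omega} \| B\boldsymbol{\tau}(\boldsymbol{x}) \|_{\ell^1} \intd \boldsymbol{x}.
\end{equation*}
By \cref{eq:cond}, the integral on the right tends to zero as $N \to \infty$, and hence so does $\| B I[\boldsymbol{\tau}] \|_{\ell^1}$.

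Combining the two estimates yields $\big| \| \mathbf{w} \|_{\ell^1} - \| \hat{\mathbf{w}} \|_{\ell^1} \big| \to 0$, from which the equivalence of (a) and (b) is immediate. I expect the only real subtlety to be the interchange of the finite sum, the absolute value, and the integral in the central display, together with the use of $\omega \in L^{\infty}(\Omega)$ to extract $\| \omega \|_{L^\infty(\Omega)}$. This is precisely where the $L^1$-type hypothesis \cref{eq:cond} is tailored to the problem: it is stated in terms of $\int_{\Omega} \| B\boldsymbol{\tau}(\boldsymbol{x}) \|_{\ell^1} \intd \boldsymbol{x}$ exactly so that this step goes through, which is also why \cref{eq:cond} (an integrated statement) rather than the stronger pointwise bound \cref{eq:observation_Bayona} is the convenient hypothesis here. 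Everything else reduces to a routine triangle-inequality argument.
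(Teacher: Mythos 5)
Your proposal is correct and follows essentially the same route as the paper's proof: the paper likewise starts from \cref{eq:relation_weights}, uses the (two-sided) triangle inequality to reduce everything to $\| BI[\boldsymbol{\tau}] \|_{\ell^1} \to 0$, and establishes that via $BI[\boldsymbol{\tau}] = I[B\boldsymbol{\tau}]$, the componentwise bound $\sum_{n} |I[(B\boldsymbol{\tau})_n]| \leq I[\| B\boldsymbol{\tau}\|_{\ell^1}]$, and the $L^\infty$ bound on $\omega$ combined with \cref{eq:cond}. No gaps.
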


A short discussion on the term ``asymptotically stable" is subsequently provided in \cref{rem:asymptotic_stable}. 

\begin{proof} 
	Assume $\Phi$ and $P$ in \cref{eq:Phi_P} have full rank and assume \cref{eq:cond} to hold. 
	Then \cref{eq:relation_weights} follows and therefore 
	\begin{equation}\label{eq:connection_proof} 
	\begin{aligned}
		\| \mathbf{w} \|_{\ell^1}
			& \leq \| \hat{\mathbf{w}} \|_{\ell^1} + \| BI[\boldsymbol{\tau}] \|_{\ell^1}, \\ 
		\| \hat{\mathbf{w}} \|_{\ell^1}
			& \leq \| \mathbf{w} \|_{\ell^1} + \| BI[\boldsymbol{\tau}] \|_{\ell^1}.
	\end{aligned}
	\end{equation} 
	Next, note that $BI[\boldsymbol{\tau}] = I[ B \boldsymbol{\tau}]$ and thus 
	\begin{equation} 
	\begin{aligned} 
		\| BI[\boldsymbol{\tau}] \|_{\ell^1} 
			= \sum_{n=1}^N \left| I[ (B \boldsymbol{\tau})_n ] \right|  
			\leq I \left[ \sum_{n=1}^N | (B \boldsymbol{\tau})_n | \right] 
			= I \left[ \| B \boldsymbol{\tau} \|_{\ell^1} \right]. 
	\end{aligned}
	\end{equation} 
	Since $\omega \in L^{\infty}(\Omega)$ it follows that 
	\begin{equation} 
		\| BI[\boldsymbol{\tau}] \|_{\ell^1} 
			\leq \| \omega \|_{L^{\infty}(\Omega)} \int_{\Omega} \| B \boldsymbol{\tau}(\boldsymbol{x}) \|_{\ell^1} \intd \boldsymbol{x}.
	\end{equation} 
	Thus, by assuming that \cref{eq:cond} holds, we get $\| BI[\boldsymbol{\tau}] \|_{\ell^1} \to 0$ for fixed $d \in \N$ and $N \to \infty$. 
	Finally, substituting this into \cref{eq:connection_proof} yields the assertion. 
\end{proof}

Essentially, \cref{lem:connection} states that--under the listed assumptions---it is sufficient to consider asymptotic stability of the pure RBF-CF. 
Once asymptotic (in)stability is established for the pure RBF-CF, by \cref{lem:connection}, it also carries over to all corresponding augmented RBF-CFs. 
Interestingly, this is following our findings for compactly supported RBFs reported in \cref{thm:main}. 
There, conditional stability was ensured independently of the degree of the augmented polynomials.

\begin{remark}[Asymptotic Stability]\label{rem:asymptotic_stable}
	We call a sequence of CFs with weights $\mathbf{w}_N \in \R^N$ for $N \in \N$ asymptotically stable if $\| \mathbf{w}_N \|_{\ell^1} \to \| I \|_{\infty}$ for $N \to \infty$. 
	Recall that $\| \mathbf{w}_N \|_{\ell^1} = \|C_N\|_{\infty}$ if the weights $\mathbf{w}_N$ correspond to the $N$-point CF $C_N$. 
	It is easy to note that this is a weaker property than every single CF being stable, i.\,e., $\| \mathbf{w}_N \|_{\ell^1} = \| I \|_{\infty}$ for all $N \in \N$. 
	That said, consulting \cref{eq:L-inequality}, asymptotic stability is sufficient for the CF to converge for all functions that can be approximated arbitrarily accurate by RBFs w.\,r.\,t.\ the $L^{\infty}(\Omega)$-norm. 
	Of course, the propagation of input errors might be suboptimal for every single CF. 
\end{remark}

\cref{lem:connection} essentially makes two assumptions. 
(1) $A$ and $P$ are full rank matrices on the data set of data points; and 
(2) the condition \cref{eq:observation_Bayona} holds. 
In the two following remarks, we comment on these assumptions. 

\begin{remark}[On the First Assumption of \cref{lem:connection}]\label{rem:assumption1}
	Although it might seem restrictive to require $A$ and $P$ to have full rank, there are often even more restrictive constraints in practical problems. 
	For instance, when solving partial differential equations, the data points are usually required to be smoothly scattered in such a way that the distance between data points is kept roughly constant. 
	For such data points, it seems unlikely to find $A$ and $P$ (for $N$ being sufficiently larger than $d$) to be singular. 
	See \cite{bayona2019insight} for more details. 
\end{remark}

\begin{remark}[On the Second Assumption of \cref{lem:connection}]\label{rem:assumption2} 
	The second assumption for \cref{lem:connection} to hold is that \cref{eq:cond} is satisfied.
	That is, the integral of $\| B \boldsymbol{\tau}(\cdot) \|_{\ell^1}: \Omega \to \R_0^+$ converges to zero as $N \to \infty$. 
	This is a weaker condition than the maximum value of $\| B \boldsymbol{\tau}(\cdot) \|_{\ell^1}$ converging to zero, which was numerically observed to hold for PHS in \cite{bayona2019insight}. 
	The relation between these conditions can be observed by applying H\"older's inequality (see, for instance, \cite[Chapter 3]{rudin1987real}). 
	Let $1 \leq p,q \leq \infty$ with $1/p + 1/q = 1$ and assume that $\omega \in L^q(\Omega)$. 
	Then we have 
	\begin{equation} 
		\int_{\Omega} \| B \boldsymbol{\tau}(\boldsymbol{x}) \|_{\ell^1} \omega(\boldsymbol{x}) \intd \boldsymbol{x} 
			\leq \left( \int_{\Omega} \| B \boldsymbol{\tau}(\boldsymbol{x}) \|_{\ell^1}^p \intd \boldsymbol{x} \right)^{1/p} 
			\left( \int_{\Omega} \omega(\boldsymbol{x})^q \intd \boldsymbol{x} \right)^{1/q}.
	\end{equation}  
	Hence, $\| B \boldsymbol{\tau}\|_{\ell^1}$ converging to zero in $L^p(\Omega)$ as $N \to \infty$ for some $p \geq 1$ immediately implies \cref{eq:relation_weights}. 
	The special case of $p = \infty$ corresponds to \cref{eq:observation_Bayona}. 
\end{remark}
\section{Numerical Results} 
\label{sec:numerical} 

We present a variety of numerical tests in one and two dimensions to demonstrate our theoretical findings. 
In particular, a stability and error analysis for CFs based on different RBFs is presented. 
Thereby, compactly supported RBFs are discussed in \cref{sub:num_compact}, Gaussian RBFs in \cref{sub:num_Gaussian}, and PHS in \cref{sub:num_PHS}. 
For sake of simplicity, a constant weight function $\omega \equiv 1$ is used in all test cases. 
All numerical tests presented here were generated by the open-access MATLAB code \cite{glaubitz2021stabilityCode}.

\subsection{Compactly Supported RBFs} 
\label{sub:num_compact}

% Demonstration of the main theorem 
Let us start with a demonstration of \cref{thm:main} in one dimension. 
To this end, we consider Wendland's compactly supported RBFs in $\Omega = [0,1]$.

\begin{figure}[tb]
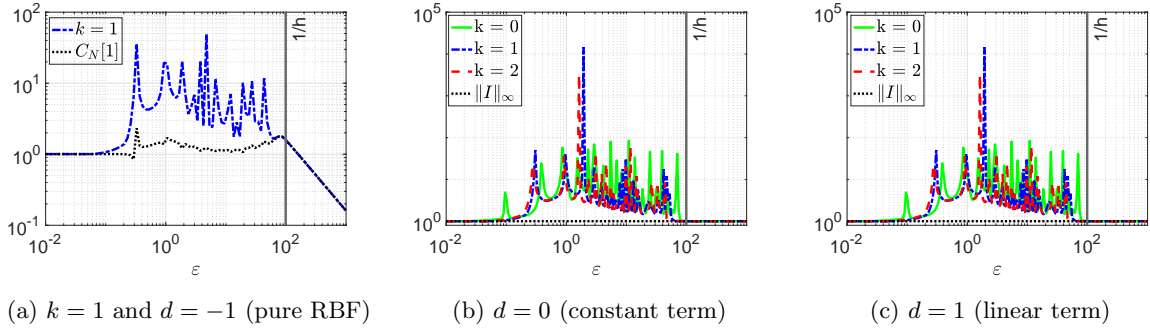

	\centering 
	\begin{subfigure}[b]{0.33\textwidth}
		\includegraphics[width=\textwidth]{%
      	plots/S611_demonstration_N100_equid_noPol} 
    \caption{$k=1$ and $d=-1$ (pure RBF)}
    \label{fig:demo_noPol}
  	\end{subfigure}%
	~
  	\begin{subfigure}[b]{0.33\textwidth}
		\includegraphics[width=\textwidth]{%
      	plots/S611_demonstration_N100_equid_d0} 
    \caption{$d=0$ (constant term)}
    \label{fig:demo_d0}
  	\end{subfigure}%
  	~ 
  	\begin{subfigure}[b]{0.33\textwidth}
		\includegraphics[width=\textwidth]{%
      	plots/S611_demonstration_N100_equid_d1} 
    \caption{$d=1$ (linear term)}
    \label{fig:demo_d1}
  	\end{subfigure}% 
  	\caption{
  	The stability measure $\|C_N\|_{\infty}$ of Wendland's compactly supported RBF $\varphi_{1,k}$ with smoothness parameters $k=0,1,2$. 
	In all cases, $N=100$ equidistant data points were considered, while the reference shape parameter $\varepsilon$ was allowed to vary. 
	$1/h$ denotes the threshold above which the basis functions have nonoverlapping support. 
  	}
  	\label{fig:demo}
\end{figure} 

% Description of the test 
\cref{fig:demo} illustrates the stability measure $\|C_N\|_{\infty}$ of Wendland's compactly supported RBF $\varphi_{1,k}$ with smoothness parameters $k=0,1,2$ as well as the optimal stability measure. 
The latter is given by $C_N[1]$ if no constants are included and by $\|I\|_\infty = 1$ if constants are included in the RBF approximations space, meaning that the RBF-CF is exact for constants. 
Furthermore, $N=100$ equidistant data points in $\Omega = [0,1]$ were used, including the end points, $x_1 = 0$ and $x_N = 1$, and the (reference) shape parameter $\varepsilon$ was allowed to vary. 
Finally, $1/h$ denotes the threshold above which the compactly supported RBFs are all having nonoverlapping support. 

% Interpretation of results
We start by noting the RBF-CFs are observed to be stable for sufficiently small shape parameters. 
This can be explained by all the basis functions, $\varphi_n$, converging to a constant function for $\varepsilon \to 0$. 
At the same time, we can also observe the RBF-CF to be stable for $\varepsilon \geq 1/h$. 
It can be argued that this is in accordance with \cref{thm:main}. 
Recall that \cref{thm:main} essentially states that for $\varepsilon \geq 1/h$, and assuming that all basis functions have equal moments ($I[\varphi_n] = I[\varphi_m]$ for all $n,m$), the corresponding RBF-CF (including polynomials of any degree) is stable if a sufficiently large number of equidistribiuted data points is used. 
Here, the equal moments condition was ensured by choosing the shape parameter as $\varepsilon_n = \varepsilon$ for the interior data points ($n=2,\dots,N-1$) and as $\varepsilon_1 = \varepsilon_N = \varepsilon/2$ for the boundary data points. 

% The Equal Moments Condition
\begin{figure}[tb]
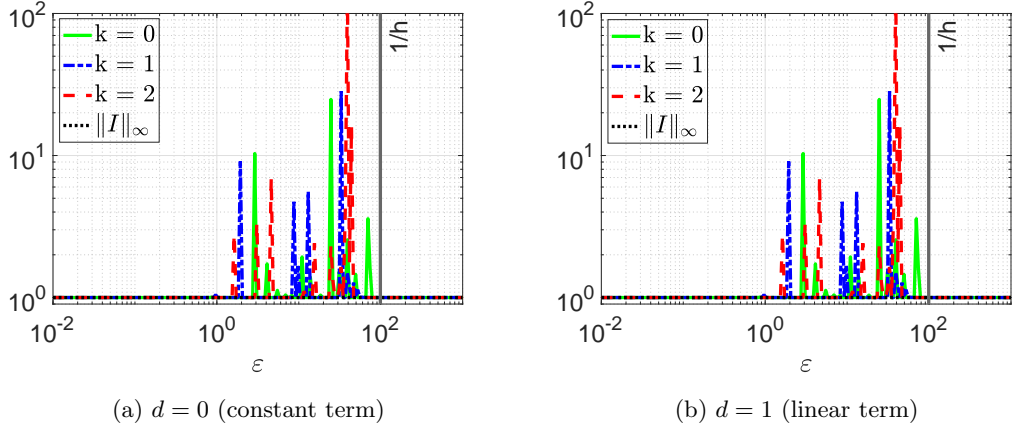

	\centering 
  	\begin{subfigure}[b]{0.45\textwidth}
		\includegraphics[width=\textwidth]{%
      	plots/S612_momentCond_N100_equid_d0} 
    \caption{$d=0$ (constant term)}
    \label{fig:momentCond_d0}
  	\end{subfigure}%
  	~ 
  	\begin{subfigure}[b]{0.45\textwidth}
		\includegraphics[width=\textwidth]{%
      	plots/S612_momentCond_N100_equid_d1} 
    \caption{$d=1$ (linear term)}
    \label{fig:momentCond_d1}
  	\end{subfigure}% 
  	\caption{
  	The stability measure $\|C_N\|_{\infty}$ of Wendland's compactly supported RBF $\varphi_{1,k}$ with smoothness parameters $k=0,1,2$. 
	In all cases, $N=100$ equidistant data points were considered. 
	The same shape parameter $\varepsilon$ was used for all basis functions, yielding (at least) the moments corresponding to the boundary data points $x_1 = 0$ and $x_N=1$ to differ from the others. 
	$1/h$ denotes the threshold above which the basis functions have nonoverlapping support.
  	}
  	\label{fig:momentCond}
\end{figure}

\begin{figure}[tb]
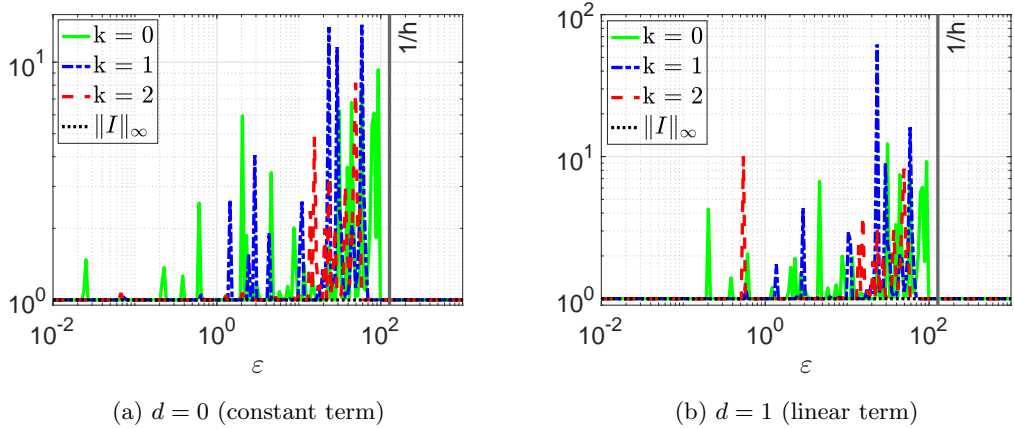

	\centering 
  	\begin{subfigure}[b]{0.45\textwidth}
		\includegraphics[width=\textwidth]{%
      	plots/S613_nonequid_N100_Halton_d0} 
    \caption{$d=0$ (constant term)}
    \label{fig:nonequid_d0}
  	\end{subfigure}%
  	~ 
  	\begin{subfigure}[b]{0.45\textwidth}
		\includegraphics[width=\textwidth]{%
      	plots/S613_nonequid_N100_Halton_d1} 
    \caption{$d=1$ (linear term)}
    \label{fig:nonequid_d1}
  	\end{subfigure}% 
  	\caption{
  	The stability measure $\|C_N\|_{\infty}$ of Wendland's compactly supported RBF $\varphi_{1,k}$ with smoothness parameters $k=0,1,2$. 
	In all cases, $N=100$ Halton points and a constant shape parameter $\varepsilon$ were considered. 
	$1/h$ denotes the threshold above which the basis functions have nonoverlapping support.
  	}
  	\label{fig:nonequid}
\end{figure}

That said, at least numerically, we observe that it is possible to drop this equal moment condition. 
This is demonstrated by \cref{fig:momentCond}, where we perform the same test as in \cref{fig:demo} except choosing all the shape parameters to be equal ($\varepsilon_n = \varepsilon$, $n=1,\dots,N$). 
This results in the two basis functions corresponding to the boundary points $x_1 = 0$ and $x_N = 1$ having smaller moments than the basis functions corresponding to interior data points for all $\varepsilon$. 
Nevertheless, we can see in \cref{fig:momentCond} that for $\varepsilon \geq 1/h$ the RBF-CFs are still stable. 
Moreover, the same observation is also made in \cref{fig:nonequid} for the same test using Halton points.
Once more, we find the corresponding RBF-CFs to be stable for $\varepsilon \geq 1/h$ as well as for sufficiently small shape parameter $\varepsilon$. 

% Error Analysis 
\begin{figure}[tb]
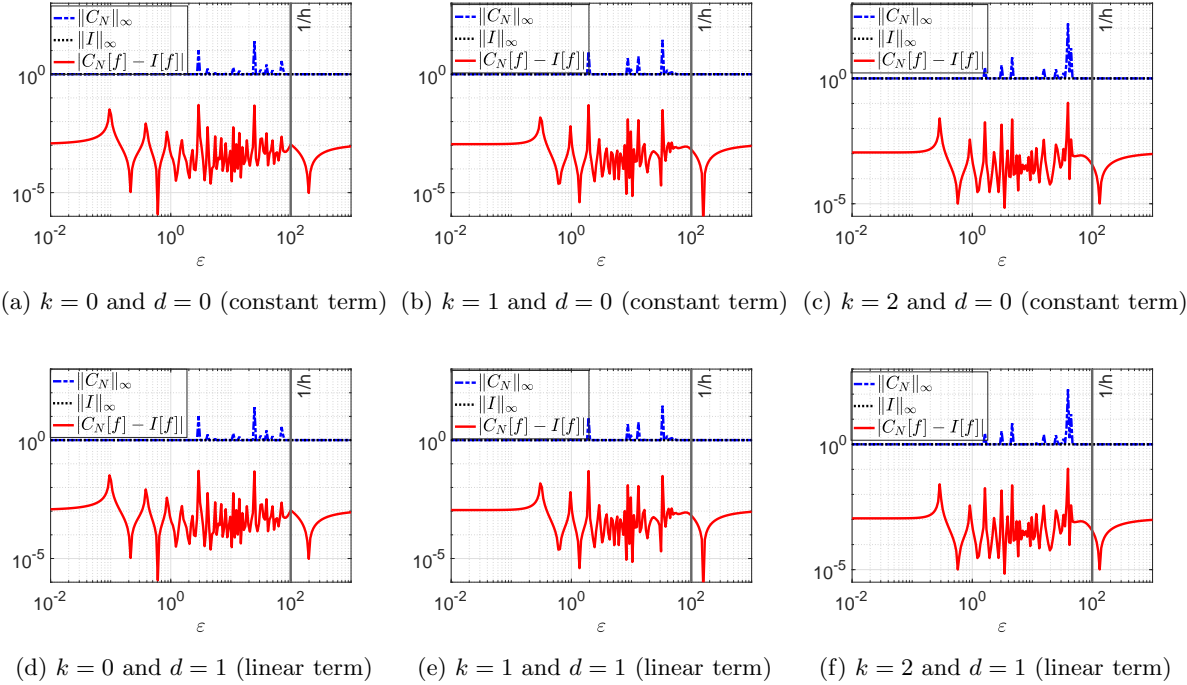

	\centering 
  	\begin{subfigure}[b]{0.33\textwidth}
		\includegraphics[width=\textwidth]{%
      	plots/S614_error_N100_equid_k0_d0} 
    \caption{$k=0$ and $d=0$ (constant term)}
    \label{fig:error_1d_k0_d0}
  	\end{subfigure}%
  	~ 
	\begin{subfigure}[b]{0.33\textwidth}
		\includegraphics[width=\textwidth]{%
      	plots/S614_error_N100_equid_k1_d0} 
    \caption{$k=1$ and $d=0$ (constant term)}
    \label{fig:error_1d_k1_d0}
  	\end{subfigure}%
	~
	\begin{subfigure}[b]{0.33\textwidth}
		\includegraphics[width=\textwidth]{%
      	plots/S614_error_N100_equid_k2_d0} 
    \caption{$k=2$ and $d=0$ (constant term)}
    \label{fig:error_1d_k2_d0}
  	\end{subfigure}%
	\\
  	\begin{subfigure}[b]{0.33\textwidth}
		\includegraphics[width=\textwidth]{%
      	plots/S614_error_N100_equid_k0_d1} 
    \caption{$k=0$ and $d=1$ (linear term)}
    \label{fig:error_1d_k0_d1}
  	\end{subfigure}% 
	~
	\begin{subfigure}[b]{0.33\textwidth}
		\includegraphics[width=\textwidth]{%
      	plots/S614_error_N100_equid_k1_d1} 
    \caption{$k=1$ and $d=1$ (linear term)}
    \label{fig:error_1d_k1_d1}
  	\end{subfigure}%
	~ 
	\begin{subfigure}[b]{0.33\textwidth}
		\includegraphics[width=\textwidth]{%
      	plots/S614_error_N100_equid_k2_d1} 
    \caption{$k=2$ and $d=1$ (linear term)}
    \label{fig:error_1d_k2_d1}
  	\end{subfigure}%
  	\caption{
	Error analysis for the one-dimensional test function $f(x) = c/(1 + (x-0.25)^2 )$ on $\Omega = [0,1]$, where $c$ is chosen such that $I[f] = 1$.  
	Illustrated are the error $| I[f] - C_N[f] |$ and the stability measure $\|C_N\|_{\infty}$ of Wendland's compactly supported RBF $\varphi_{1,k}$ with smoothness parameters $k=0,1,2$.
	In all cases, $N=100$ equidistant data points were considered, while the reference shape parameter $\varepsilon$ was allowed to vary. 
	$1/h$ denotes the threshold above which the basis functions have nonoverlapping support.
  	}
  	\label{fig:error_1d}
\end{figure} 

To also provide an error analysis, \cref{fig:error_1d} compares the stability measure $\|C_N\|_{\infty}$ with the error of the RBF-CF for the Runge-like test function $f(x) = c/(1 + (x-0.25)^2 )$ on $\Omega = [0,1]$, where $c$ is chosen such that $I[f] = 1$. 
Once more, we considered Wendland's compactly supported RBF $\varphi_{1,k}$ with smoothness parameters $k=0,1,2$, $N=100$ equidistant data points, and a varying shape parameter $\varepsilon$, which is the same for all basis functions. 
There are a few observations that can be made based on the results reported in \cref{fig:error_1d}. 
Arguably most importantly, the smallest error seems to be obtained for a shape parameter that yields the RBF-CF to be stable ($\|C_N\|_{\infty} = \|I\|_{\infty}$).

% Genz's Test Functions 
\begin{figure}[tb]
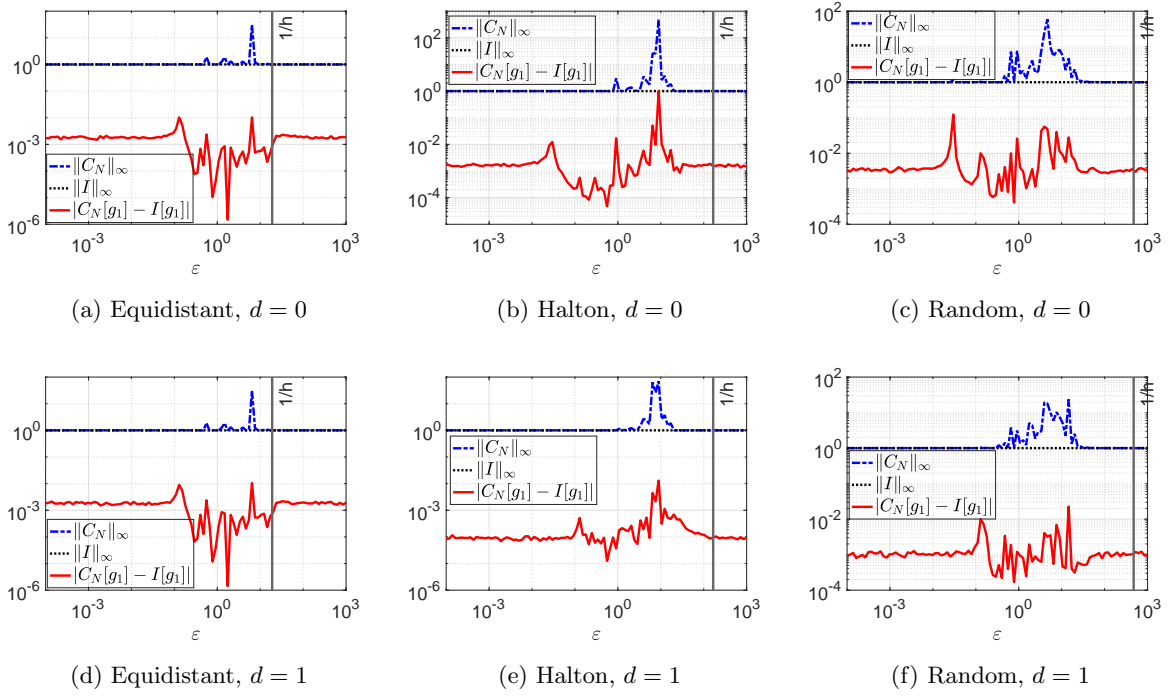

	\centering 
  	\begin{subfigure}[b]{0.33\textwidth}
		\includegraphics[width=\textwidth]{%
      	plots/S715_error_2d_Genz1_N400_equid_k1_d0} 
    \caption{Equidistant, $d=0$}
    \label{fig:error_2d_Genz1_N400_equid_k1_d0}
  	\end{subfigure}%
	~
	\begin{subfigure}[b]{0.33\textwidth}
		\includegraphics[width=\textwidth]{%
      	plots/S715_error_2d_Genz1_N400_Halton_k1_d0} 
    \caption{Halton, $d=0$}
    \label{fig:error_2d_Genz1_N400_Halton_k1_d0}
  	\end{subfigure}%
	~ 
	\begin{subfigure}[b]{0.33\textwidth}
		\includegraphics[width=\textwidth]{%
      	plots/S715_error_2d_Genz1_N400_random_k1_d0} 
    \caption{Random, $d=0$}
    \label{fig:error_2d_Genz1_N400_random_k1_d0}
  	\end{subfigure}%
	\\
	\begin{subfigure}[b]{0.33\textwidth}
		\includegraphics[width=\textwidth]{%
      	plots/S715_error_2d_Genz1_N400_equid_k1_d1} 
    \caption{Equidistant, $d=1$}
    \label{fig:error_2d_Genz1_N400_equid_k1_d1}
  	\end{subfigure}%
	~
	\begin{subfigure}[b]{0.33\textwidth}
		\includegraphics[width=\textwidth]{%
      	plots/S715_error_2d_Genz1_N400_Halton_k1_d1} 
    \caption{Halton, $d=1$}
    \label{fig:error_2d_Genz1_N400_Halton_k1_d1}
  	\end{subfigure}%
	~ 
	\begin{subfigure}[b]{0.33\textwidth}
		\includegraphics[width=\textwidth]{%
      	plots/S715_error_2d_Genz1_N400_random_k1_d1} 
    \caption{Random, $d=1$}
    \label{fig:error_2d_Genz1_N400_random_k1_d1}
  	\end{subfigure}%
  	\caption{
	Error analysis for Wendland's compactly supported RBF $\varphi_{2,k}$ in two dimensions with smoothness parameter $k=1$.
	Considered is the first Genz test function $g_1$ on $\Omega = [0,1]^2$; see \cref{eq:Genz}.  
	In all cases, $N=400$ data points (equidistant, Halton, or random) were considered, while the reference shape parameter $\varepsilon$ was allowed to vary. 
	$1/h$ denotes the threshold above which the basis functions have nonoverlapping support.
  	}
  	\label{fig:error_2d_Genz1_k1}
\end{figure}

\begin{figure}[tb]
	\centering 
  	\begin{subfigure}[b]{0.33\textwidth}
		\includegraphics[width=\textwidth]{%
      	plots/S715_error_2d_Genz4_N400_equid_k1_d0} 
    \caption{Equidistant, $d=0$}
    \label{fig:error_2d_Genz4_N400_equid_k1_d0}
  	\end{subfigure}%
	~
	\begin{subfigure}[b]{0.33\textwidth}
		\includegraphics[width=\textwidth]{%
      	plots/S715_error_2d_Genz4_N400_Halton_k1_d0} 
    \caption{Halton, $d=0$}
    \label{fig:error_2d_Genz4_N400_Halton_k1_d0}
  	\end{subfigure}%
	~ 
	\begin{subfigure}[b]{0.33\textwidth}
		\includegraphics[width=\textwidth]{%
      	plots/S715_error_2d_Genz4_N400_random_k1_d0} 
    \caption{Random, $d=0$}
    \label{fig:error_2d_Genz4_N400_random_k1_d0}
  	\end{subfigure}%
	\\
	\begin{subfigure}[b]{0.33\textwidth}
		\includegraphics[width=\textwidth]{%
      	plots/S715_error_2d_Genz4_N400_equid_k1_d1} 
    \caption{Equidistant, $d=1$}
    \label{fig:error_2d_Genz4_N400_equid_k1_d1}
  	\end{subfigure}%
	~
	\begin{subfigure}[b]{0.33\textwidth}
		\includegraphics[width=\textwidth]{%
      	plots/S715_error_2d_Genz4_N400_Halton_k1_d1} 
    \caption{Halton, $d=1$}
    \label{fig:error_2d_Genz4_N400_Halton_k1_d1}
  	\end{subfigure}%
	~ 
	\begin{subfigure}[b]{0.33\textwidth}
		\includegraphics[width=\textwidth]{%
      	plots/S715_error_2d_Genz4_N400_random_k1_d1} 
    \caption{Random, $d=1$}
    \label{fig:error_2d_Genz4_N400_random_k1_d1}
  	\end{subfigure}%
  	\caption{
	Error analysis for Wendland's compactly supported RBF $\varphi_{2,k}$ in two dimensions with smoothness parameter $k=1$.
	Considered is the fourth Genz test function $g_4$ on $\Omega = [0,1]^2$; see \cref{eq:Genz}.  
	In all cases, $N=400$ data points (equidistant, Halton, or random) were considered, while the reference shape parameter $\varepsilon$ was allowed to vary. 
	$1/h$ denotes the threshold above which the basis functions have nonoverlapping support.
  	}
  	\label{fig:error_2d_Genz4_k1}
\end{figure}

Next, we extend our numerical stability and error analysis to two dimensions, considering the domain $\Omega = [0,1]^2$ and the following Genz test functions \cite{genz1984testing} (also see \cite{van2020adaptive}): 
\begin{equation}\label{eq:Genz}
\begin{aligned}
	g_1(\boldsymbol{x}) 
		& = \cos\left( 2 \pi b_1 + \sum_{i=1}^q a_i x_i \right) \quad 
		&& \text{(oscillatory)}, \\
	g_2(\boldsymbol{x}) 
		& = \prod_{i=1}^q \left( a_i^{-2} + (x_i - b_i)^2 \right)^{-1} \quad 
		&& \text{(product peak)}, \\
	g_3(\boldsymbol{x}) 
		& = \left( 1 + \sum_{i=1}^q a_i x_i \right)^{-(q+1)} \quad 
		&& \text{(corner peak)}, \\
	g_4(\boldsymbol{x}) 
		& = \exp \left( - \sum_{i=1}^q a_i^2 ( x_i - b_i )^2 \right) \quad 
		&& \text{(Gaussian)}
\end{aligned}
\end{equation} 
Here, $q$ denotes the dimension under consideration and is henceforth chosen as $q=2$. 
These functions are designed to have different difficult characteristics for numerical integration routines.
The vectors $\mathbf{a} = (a_1,\dots,a_q)^T$ and $\mathbf{b} = (b_1,\dots,b_q)^T$ respectively contain (randomly chosen) shape and translation parameters. 
For each case, the experiment was repeated $100$ times. 
At the same time, for each experiment, the vectors $\mathbf{a}$ and $\mathbf{b}$ were drawn randomly from $[0,1]^2$. 
For reasons of space, we only report the results for $g_1$ and $g_4$ as well as $k=1$. 
These can be found in \cref{fig:error_2d_Genz1_k1} and \cref{fig:error_2d_Genz4_k1}, respectively. 
As before, the smallest errors are found for shape parameters that correspond to the RBF-CF being stable.  
The results for $g_2, g_3$ and $k=0,2$ are similar and can be found as part of the open-access MATLAB code \cite{glaubitz2021stabilityCode}.  

\begin{table}[tb]
\renewcommand{\arraystretch}{1.4}
\centering 
%\begin{adjustbox}{width=1\textwidth}
  	\begin{tabular}{c c c c c c c c c} 
	\toprule 
	& & \multicolumn{3}{c}{$g_1$} & & \multicolumn{3}{c}{$g_4$} \\ \hline 
	& & $e_{\text{min}}$ & $\varepsilon$ & $\|C_N\|_{\infty}$ 
	& & $e_{\text{min}}$ & $\varepsilon$ & $\|C_N\|_{\infty}$ \\ \hline 
	& & \multicolumn{7}{c}{Equidistant Points} \\ \hline 
	$d=0$ 	& & 1.4e-06 & 1.7e+00 & 1.0e+00 
				& & 5.6e-06 & 1.7e+00 & 1.0e+00 \\ 
	$d=1$ 	& & 1.7e-06 & 1.7e+00 & 1.0e+00 			
				& & 6.2e-06 & 1.7e+00 & 1.0e+00 \\ \hline 
	& & \multicolumn{7}{c}{Halton Points} \\ \hline 
	$d=0$ 	& & 5.0e-05 & 5.5e-01 & 1.0e+00 
				& & 2.0e-05 & 5.5e-01 & 1.0e+00 \\ 
	$d=1$ 	& & 1.1e-05 & 5.5e-01 & 1.0e+00 
				& & 1.4e-05 & 5.5e-01 & 1.0e+00 \\ \hline 
	& & \multicolumn{7}{c}{Random Points} \\ \hline 
	$d=0$ 	& & 4.1e-04 & 7.7e-01 & 1.0e+00 
				& & 1.6e-04 & 7.7e-01 & 1.0e+00 \\ \hline 
	$d=1$ 	& & 2.3e-04 & 2.9e-01 & 1.0e+00 
				& & 1.8e-04 & 4.0e-01 & 1.0e+00 \\
	\bottomrule
\end{tabular} 
%\end{adjustbox}
\caption{Minimal errors, $e_{\text{min}}$, for the first and fourth Genz test function, $g_1$ and $g_4$, together with the corresponding shape parameter, $\varepsilon$, and stability measure, $\|C_N\|_{\infty}$. 
	In all cases, Wendland's compactly supported RBF with smoothness parameter $k=1$ was used.}
\label{tab:min_error_Wendland}
\end{table}

% Table listing the smallest errors with the corresponding shape parameter and stability measure 
It might be hard to identify the smallest errors as well as the corresponding shape parameter and stability measure from \cref{fig:error_2d_Genz1_k1} and \cref{fig:error_2d_Genz4_k1}. 
Hence, these are listed separately in \cref{tab:min_error_Wendland}.

\subsection{Gaussian RBF} 
\label{sub:num_Gaussian}

Here, we perform a similar investigation of stability and accuracy as in \cref{sub:num_compact} for the Gaussian RBF, given by $\varphi(r) = \exp( \varepsilon^2 r^2 )$. 

\begin{figure}[tb]
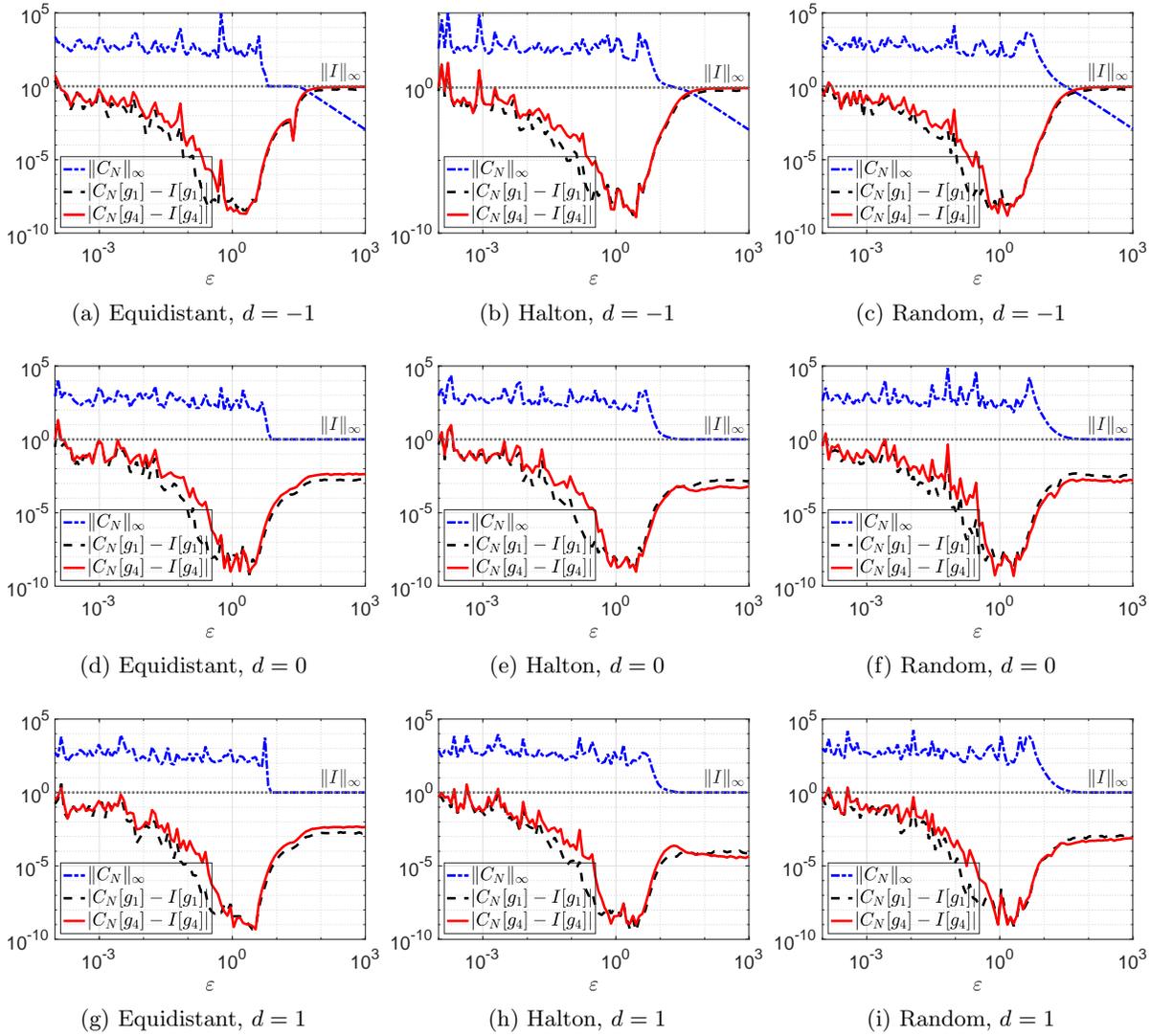

	\centering 
	\begin{subfigure}[b]{0.33\textwidth}
		\includegraphics[width=\textwidth]{%
      	plots/S72_G_Genz14_N400_equid_noPol_noise0} 
    \caption{Equidistant, $d=-1$}
    \label{fig:G_Genz14_N400_equid_noPol}
  	\end{subfigure}%
	~
	\begin{subfigure}[b]{0.33\textwidth}
		\includegraphics[width=\textwidth]{%
      	plots/S72_G_Genz14_N400_Halton_noPol_noise0} 
    \caption{Halton, $d=-1$}
    \label{fig:G_Genz14_N400_Halton_noPol}
  	\end{subfigure}%
	~ 
	\begin{subfigure}[b]{0.33\textwidth}
		\includegraphics[width=\textwidth]{%
      	plots/S72_G_Genz14_N400_random_noPol_noise0} 
    \caption{Random, $d=-1$}
    \label{fig:G_Genz14_N400_random_noPol}
  	\end{subfigure}%
	\\
  	\begin{subfigure}[b]{0.33\textwidth}
		\includegraphics[width=\textwidth]{%
      	plots/S72_G_Genz14_N400_equid_d0_noise0} 
    \caption{Equidistant, $d=0$}
    \label{fig:G_Genz14_N400_equid_d0}
  	\end{subfigure}%
	~
	\begin{subfigure}[b]{0.33\textwidth}
		\includegraphics[width=\textwidth]{%
      	plots/S72_G_Genz14_N400_Halton_d0_noise0} 
    \caption{Halton, $d=0$}
    \label{fig:G_Genz14_N400_Halton_d0}
  	\end{subfigure}%
	~ 
	\begin{subfigure}[b]{0.33\textwidth}
		\includegraphics[width=\textwidth]{%
      	plots/S72_G_Genz14_N400_random_d0_noise0} 
    \caption{Random, $d=0$}
    \label{fig:G_Genz14_N400_random_d0}
  	\end{subfigure}%
	\\
	\begin{subfigure}[b]{0.33\textwidth}
		\includegraphics[width=\textwidth]{%
      	plots/S72_G_Genz14_N400_equid_d1_noise0} 
    \caption{Equidistant, $d=1$}
    \label{fig:G_Genz14_N400_equid_d1}
  	\end{subfigure}%
	~
	\begin{subfigure}[b]{0.33\textwidth}
		\includegraphics[width=\textwidth]{%
      	plots/S72_G_Genz14_N400_Halton_d1_noise0} 
    \caption{Halton, $d=1$}
    \label{fig:G_Genz14_N400_Halton_d1}
  	\end{subfigure}%
	~ 
	\begin{subfigure}[b]{0.33\textwidth}
		\includegraphics[width=\textwidth]{%
      	plots/S72_G_Genz14_N400_random_d1_noise0} 
    \caption{Random, $d=1$}
    \label{fig:G_Genz14_N400_random_d1}
  	\end{subfigure}%
  	\caption{
	Error analysis for the Gaussian RBF $\varphi(r) = \exp( \varepsilon^2 r^2 ) $ in two dimensions for the first and fourth Genz test function $g_1, g_4$ on $\Omega = [0,1]^2$; see \cref{eq:Genz}.  
	In all cases, $N=400$ data points (equidistant, Halton, or random) were considered, while the reference shape parameter $\varepsilon$ was allowed to vary.
  	}
  	\label{fig:error_Gauss_2d_Genz14}
\end{figure}

In particuar, \cref{fig:error_Gauss_2d_Genz14} reports on the stability measure $\|C_N\|_\infty$ for the Gaussian RBF-CF and the corresponding errors for the first and fourth Genz test function on $\Omega = [0,1]^2$ for $N=400$ data points. 
These are given as equidistant, Halton and random points, respectively. 
Furthermore, the shape parameter was allowed to vary from $10^{-4}$ to $10^3$ and the RBF-CF was computed by augmenting the RBF basis with no ($d=-1$) polynomials, a constant ($d=0$), or a linear term ($d=1$). 
Also for the Gaussian RBFs, we observe the RBF-CFs to be stable for a sufficiently large shape parameter. 
It might be argued that this is because the Gaussian RBF can be considered as being  ``close" to a compactly supported RBF for large shape parameter.\footnote{Of course, strictly speaking, the Gaussian RBF does not have compact support. Yet, for large $\varepsilon^2 r^2$ its function value will lie below machine precision, making it compactly supported in a numerical sense.}
At the same time, however, the Gaussian RBF-CF are observed to become unstable for decreasing shape parameter $\varepsilon$. 
Furthermore, we observe the smallest error to occur in a region of instability in this case. 
Roughly speaking, this shape parameter---providing a minimal error---usually lies slightly below the smallest shape parameter that yields a stable RBF-CF. 
This might be explained by this shape parameter balancing out the two terms in \cref{eq:L-inequality}. 
One the one hand, the RBF space $\mathcal{S}_{N,d,\varepsilon}$ should provide a best approximation that is as close as possible to the underlying function, $f$. 
This is reflected in the term $\norm{ f - s }_{L^{\infty}(\Omega)}$ on the right hand side of \cref{eq:L-inequality}. 
On the other hand, the stability measure of the corresponding RBF-CF should be as small as possible. 
This is reflected in the term $\| I \|_{\infty} + \| C_N \|_{\infty}$, by which $\norm{ f - s }_{L^{\infty}(\Omega)}$ is multiplied in \cref{eq:L-inequality}. 
While for Gaussian RBFs the best approximation becomes more accurate for a decreasing shape parameter, the stability measure benefits from increasing shape parameters. 
In this case, the balance between these two objectives---and therefore the smallest error---is found outside of the region of stability. 

\begin{figure}[tb]
	\centering 
	\begin{subfigure}[b]{0.33\textwidth}
		\includegraphics[width=\textwidth]{%
      	plots/S72_G_Genz14_N400_equid_noPol_noise4} 
    \caption{Equidistant, $d=-1$}
    \label{fig:G_Genz14_N400_equid_noPol_noise4}
  	\end{subfigure}%
	~
	\begin{subfigure}[b]{0.33\textwidth}
		\includegraphics[width=\textwidth]{%
      	plots/S72_G_Genz14_N400_Halton_noPol_noise4} 
    \caption{Halton, $d=-1$}
    \label{fig:G_Genz14_N400_Halton_noPol_noise4}
  	\end{subfigure}%
	~ 
	\begin{subfigure}[b]{0.33\textwidth}
		\includegraphics[width=\textwidth]{%
      	plots/S72_G_Genz14_N400_random_noPol_noise4} 
    \caption{Random, $d=-1$}
    \label{fig:G_Genz14_N400_random_noPol_noise4}
  	\end{subfigure}%
	\\
  	\begin{subfigure}[b]{0.33\textwidth}
		\includegraphics[width=\textwidth]{%
      	plots/S72_G_Genz14_N400_equid_d0_noise4} 
    \caption{Equidistant, $d=0$}
    \label{fig:G_Genz14_N400_equid_d0_noise4}
  	\end{subfigure}%
	~
	\begin{subfigure}[b]{0.33\textwidth}
		\includegraphics[width=\textwidth]{%
      	plots/S72_G_Genz14_N400_Halton_d0_noise4} 
    \caption{Halton, $d=0$}
    \label{fig:G_Genz14_N400_Halton_d0_noise4}
  	\end{subfigure}%
	~ 
	\begin{subfigure}[b]{0.33\textwidth}
		\includegraphics[width=\textwidth]{%
      	plots/S72_G_Genz14_N400_random_d0_noise4} 
    \caption{Random, $d=0$}
    \label{fig:G_Genz14_N400_random_d0_noise4}
  	\end{subfigure}%
	\\
	\begin{subfigure}[b]{0.33\textwidth}
		\includegraphics[width=\textwidth]{%
      	plots/S72_G_Genz14_N400_equid_d1_noise4} 
    \caption{Equidistant, $d=1$}
    \label{fig:G_Genz14_N400_equid_d1_noise4}
  	\end{subfigure}%
	~
	\begin{subfigure}[b]{0.33\textwidth}
		\includegraphics[width=\textwidth]{%
      	plots/S72_G_Genz14_N400_Halton_d1_noise4} 
    \caption{Halton, $d=1$}
    \label{fig:G_Genz14_N400_Halton_d1_noise4}
  	\end{subfigure}%
	~ 
	\begin{subfigure}[b]{0.33\textwidth}
		\includegraphics[width=\textwidth]{%
      	plots/S72_G_Genz14_N400_random_d1_noise4} 
    \caption{Random, $d=1$}
    \label{fig:G_Genz14_N400_random_d1_noise4}
  	\end{subfigure}%
  	\caption{
	Error analysis for the Gaussian RBF $\varphi(r) = \exp( \varepsilon^2 r^2 ) $ in two dimensions for the first and fourth Genz test function $g_1, g_4$ on $\Omega = [0,1]^2$; see \cref{eq:Genz}. 
	Uniform white noise $\mathbf{n} \in \R^N$ with $\| \mathbf{n} \|_\infty \leq 10^{-4}$ was added to the function values.  
	In all cases, $N=400$ data points (equidistant, Halton, or random) were considered, while the reference shape parameter $\varepsilon$ was allowed to vary.
  	}
  	\label{fig:error_Gauss_2d_Genz14_noise4}
\end{figure}

That said, the situation changes if the data (function values) used in the RBF-CFs are perturbed by noise, which is often the case in applications. 
Such a situation is reported in \cref{fig:error_Gauss_2d_Genz14_noise4}. 
Here, uniform white noise $\mathbf{n} \in \R^N$ with $\| \mathbf{n} \|_\infty \leq 10^{-4}$ was added to the function values of the first and fourth Genz test function. 
As a result, the term including the stability measure $\| C_N \|_{\infty}$ in \cref{eq:L-inequality} gains in importance. 
In accordance with this, the minimal errors in \cref{fig:error_Gauss_2d_Genz14_noise4} are now attained for larger shape parameters that correspond to the RBF-CF having a smaller stability measure $\| C_N \|_{\infty}$ as before. 
Also see \cref{tab:min_error_Gauss_g1} and \cref{tab:min_error_Gauss_g4} below. 
In particular, this demonstrates the increased importance of stability of CF when these are used in real-world applications where the presence of noise can often not be avoided. 

\begin{table}[tb]
\renewcommand{\arraystretch}{1.4}
\centering 
%\begin{adjustbox}{width=1\textwidth}
  	\begin{tabular}{c c c c c c c c c} 
	\toprule 
	& & \multicolumn{3}{c}{$g_1$ without noise} & & \multicolumn{3}{c}{$g_1$ with noise} \\ \hline 
	& & $e_{\text{min}}$ & $\varepsilon$ & $\|C_N\|_{\infty}$ 
	& & $e_{\text{min}}$ & $\varepsilon$ & $\|C_N\|_{\infty}$ \\ \hline 
	& & \multicolumn{7}{c}{Equidistant Points} \\ \hline 
	$d=0$ 	& & 6.1e-10 & 2.4e+00 & 1.1e+02 
				& & 6.6e-06 & 7.5e+00 & 1.0e+00 \\ 
	$d=1$ 	& & 5.4e-10 & 3.3e+00 & 2.6e+02 			
				& & 6.7e-06 & 7.5e+00 & 1.0e+00 \\ \hline 
	& & \multicolumn{7}{c}{Halton Points} \\ \hline 
	$d=0$ 	& & 2.4e-09 & 2.8e+00 & 8.1e+01 
				& & 4.6e-05 & 8.9e+00 & 3.9e+00 \\ 
	$d=1$ 	& & 4.1e-10 & 2.8e+00 & 1.4e+02 
				& & 1.3e-05 & 1.0e+01 & 2.2e+00 \\ \hline 
	& & \multicolumn{7}{c}{Random Points} \\ \hline 
	$d=0$ 	& & 1.5e-09 & 2.0e+00 & 6.4e+01 
				& & 1.9e-04 & 2.0e+00 & 6.4e+01 \\ \hline 
	$d=1$ 	& & 7.8e-10 & 2.0e+00 & 1.0e+02 
				& & 9.1e-05 & 1.2e+01 & 1.0e+01 \\
	\bottomrule
\end{tabular} 
%\end{adjustbox}
\caption{Minimal errors, $e_{\text{min}}$, for the first Genz test function, $g_1$, with and without noise together with the corresponding shape parameter, $\varepsilon$, and stability measure, $\|C_N\|_{\infty}$. 
	In all cases, the Gaussian RBF was used.}
\label{tab:min_error_Gauss_g1}
\end{table}

\begin{table}[tb]
\renewcommand{\arraystretch}{1.4}
\centering 
%\begin{adjustbox}{width=1\textwidth}
  	\begin{tabular}{c c c c c c c c c} 
	\toprule 
	& & \multicolumn{3}{c}{$g_4$ without noise} & & \multicolumn{3}{c}{$g_4$ with noise} \\ \hline 
	& & $e_{\text{min}}$ & $\varepsilon$ & $\|C_N\|_{\infty}$ 
	& & $e_{\text{min}}$ & $\varepsilon$ & $\|C_N\|_{\infty}$ \\ \hline 
	& & \multicolumn{7}{c}{Equidistant Points} \\ \hline 
	$d=0$ 	& & 7.8e-10 & 2.4e+00 & 1.1e+02 
				& & 1.0e-05 & 6.4e+00 & 3.1e+00 \\ 
	$d=1$ 	& & 4.6e-10 & 3.3e+00 & 2.6e+02 			
				& & 1.0e-05 & 6.4e+00 & 3.1e+00 \\ \hline 
	& & \multicolumn{7}{c}{Halton Points} \\ \hline 
	$d=0$ 	& & 1.0e-09 & 2.8e+00 & 8.1e+01 
				& & 2.8e-05 & 8.9e+00 & 3.9e+00 \\ 
	$d=1$ 	& & 1.0e-09 & 2.8e+00 & 1.4e+02 
				& & 2.0e-05 & 8.9e+00 & 3.9e+00 \\ \hline 
	& & \multicolumn{7}{c}{Random Points} \\ \hline 
	$d=0$ 	& & 4.8e-10 & 2.0e+00 & 6.4e+01 
				& & 1.3e-04 & 1.7e+01 & 3.6e+00 \\ \hline 
	$d=1$ 	& & 9.7e-10 & 9.1e-01 & 1.5e+02 
				& & 6.6e-05 & 1.4e+01 & 5.7e+00 \\
	\bottomrule
\end{tabular} 
%\end{adjustbox}
\caption{Minimal errors, $e_{\text{min}}$, for the fourth Genz test function, $g_4$, with and without noise together with the corresponding shape parameter, $\varepsilon$, and stability measure, $\|C_N\|_{\infty}$. 
	In all cases, the Gaussian RBF was used.}
\label{tab:min_error_Gauss_g4}
\end{table}

% Tables listing the smallest errors with the corresponding shape parameter and stability measure 
It might be hard to identify the smallest errors as well as the corresponding shape parameter and stability measure from \cref{fig:error_Gauss_2d_Genz14} and \cref{fig:error_Gauss_2d_Genz14_noise4}. 
Hence, these are listed separately in \cref{tab:min_error_Gauss_g1} and \cref{tab:min_error_Gauss_g4} for the first and fourth Genz test function with and without noise, respectively.

\subsection{Polyharmonic Splines} 
\label{sub:num_PHS}

\begin{figure}[tb]
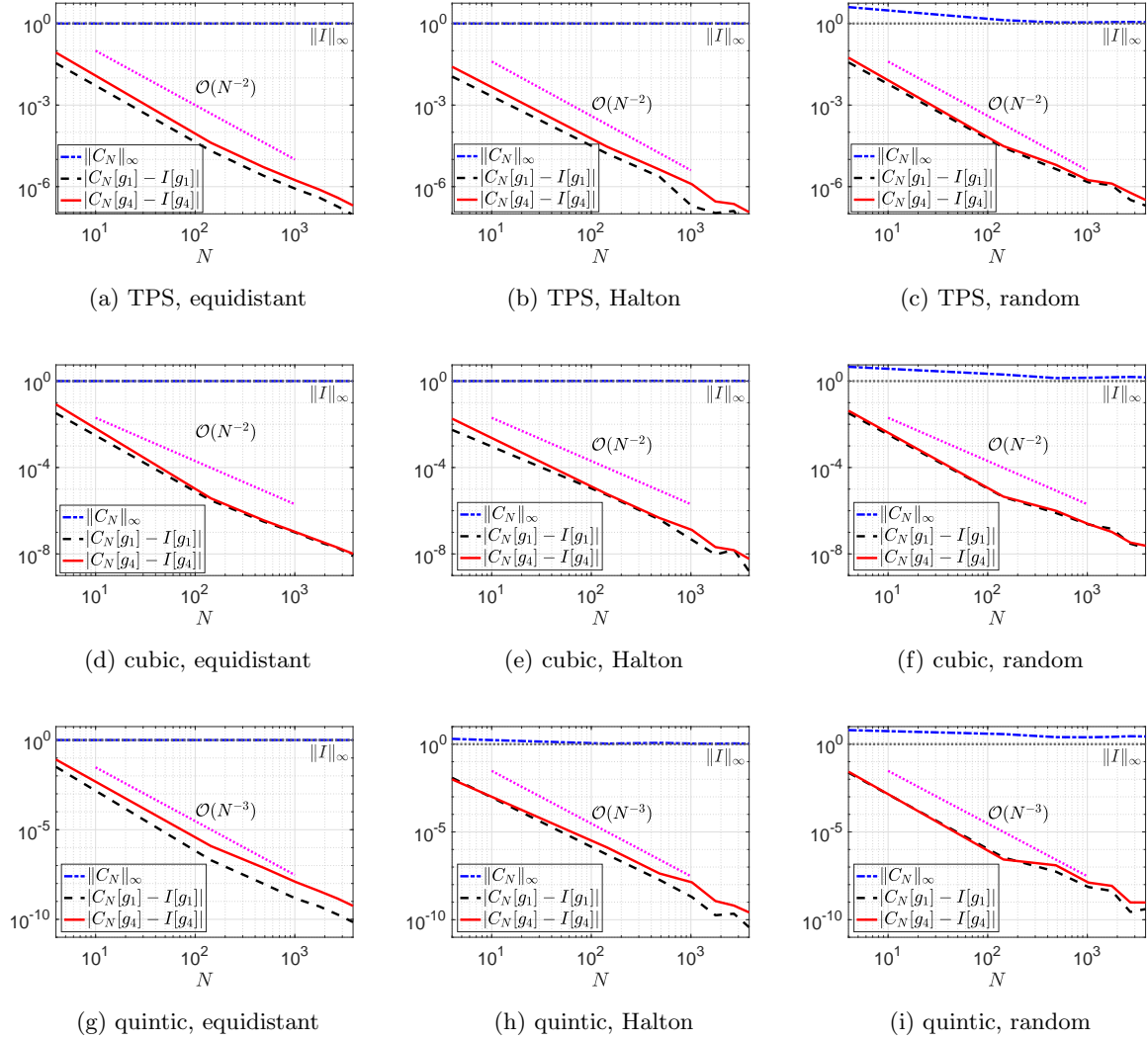

	\centering 
	\begin{subfigure}[b]{0.33\textwidth}
		\includegraphics[width=\textwidth]{%
      	plots/S73_TPS_Genz14_equid_d1} 
    \caption{TPS, equidistant}
    \label{fig:S73_TPS_Genz14_equid_d1}
  	\end{subfigure}%
	~
	\begin{subfigure}[b]{0.33\textwidth}
		\includegraphics[width=\textwidth]{%
      	plots/S73_TPS_Genz14_Halton_d1} 
    \caption{TPS, Halton}
    \label{fig:S73_TPS_Genz14_Halton_d1}
  	\end{subfigure}%
	~ 
	\begin{subfigure}[b]{0.33\textwidth}
		\includegraphics[width=\textwidth]{%
      	plots/S73_TPS_Genz14_random_d1} 
    \caption{TPS, random}
    \label{fig:S73_TPS_Genz14_random_d1}
  	\end{subfigure}%
	\\
  	\begin{subfigure}[b]{0.33\textwidth}
		\includegraphics[width=\textwidth]{%
      	plots/S73_cubic_Genz14_equid_d1} 
    \caption{cubic, equidistant}
    \label{fig:S73_cubic_Genz14_equid_d1}
  	\end{subfigure}%
	~
	\begin{subfigure}[b]{0.33\textwidth}
		\includegraphics[width=\textwidth]{%
      	plots/S73_cubic_Genz14_Halton_d1} 
    \caption{cubic, Halton}
    \label{fig:S73_cubic_Genz14_Halton_d1}
  	\end{subfigure}%
	~ 
	\begin{subfigure}[b]{0.33\textwidth}
		\includegraphics[width=\textwidth]{%
      	plots/S73_cubic_Genz14_random_d1} 
    \caption{cubic, random}
    \label{fig:S73_cubic_Genz14_random_d1}
  	\end{subfigure}%
	\\
	\begin{subfigure}[b]{0.33\textwidth}
		\includegraphics[width=\textwidth]{%
      	plots/S73_quintic_Genz14_equid_d1} 
    \caption{quintic, equidistant}
    \label{fig:S73_quintic_Genz14_equid_d1}
  	\end{subfigure}%
	~
	\begin{subfigure}[b]{0.33\textwidth}
		\includegraphics[width=\textwidth]{%
      	plots/S73_quintic_Genz14_Halton_d1} 
    \caption{quintic, Halton}
    \label{fig:S73_quintic_Genz14_Halton_d1}
  	\end{subfigure}%
	~ 
	\begin{subfigure}[b]{0.33\textwidth}
		\includegraphics[width=\textwidth]{%
      	plots/S73_quintic_Genz14_random_d1} 
    \caption{quintic, random}
    \label{fig:S73_quintic_Genz14_random_d1}
  	\end{subfigure}%
  	\caption{
	Error analysis for the TPS ($\varphi(r) = r^2 \log r$), cubic ($\varphi(r) = r^3$) and quintic ($\varphi(r) = r^5$) in two dimensions. 
	The first and fourth Genz test functions $g_1, g_4$ were considered on $\Omega = [0,1]^2$; see \cref{eq:Genz}. 
	In all cases, linear terms were incorporated, i.\,e., $d=1$. 
  	}
  	\label{fig:error_PHS_Genz14}
\end{figure} 

We end this section by providing a similar investigation for PHS. 
Again, the first and fourth Genz test functions on $\Omega = [0,1]^2$ are considered. 
However, for PHS no shape parameter is involved and we therefore consider their stability and accuracy for an increasing number of equidistant, Halton and random data points. 
The results for the TPS ($\varphi(r) = r^2 \log r$), cubic ($\varphi(r) = r^3$) and quintic ($\varphi(r) = r^5$) PHS RBFs can be found in \cref{fig:error_PHS_Genz14}. 
In all cases, the corresponding PHS basis was augmented with a linear term ($d=1$). 
We can observe from \cref{fig:error_PHS_Genz14} that all RBF-CFs converge (with the rate of convergence depending on the order of the PHS) while also remaining stable or at least being asymptotically stable. 
It would be of interest to provide a theoretical investigation on (asymptotic) stability of PHS-CFs and under which conditions this might be ensured. 
This might be addressed in future works.  
\section{Concluding Thoughts} 
\label{sec:summary} 

% Findings 
In this work, we investigated stability of RBF-CFs. 
We started by showing that stability of RBF-CFs can be connected to the famous Lebesgue constant of the underlying RBF interpolant. 
While this indicates that RBF-CFs might benefit from low Lebesgue constants, it was also demonstrated that RBF-CFs often have superior stability properties compared to RBF interpolation. 
Furthermore, stability was proven for RBF-CFs based on compactly supported RBFs under the assumption of a sufficiently large number of (equidistributed) data points and the shape parameter(s) lying above a certain threshold. 
Finally, we showed that under certain conditions asymptotic stability of RBF-CFs is independent of polynomial terms that are usually included in RBF approximations. 
The above findings were accompanied by a series of numerical tests. 

% Outlook 
While we believe this work to be a valuable step towards a more mature stability theory of RBF-CFs, the present work also demonstrates that further steps in this direction would be highly welcome.

%\section*{Acknowledgements}
%The authors would like to thank ... for helpful advice.

\appendix 
\section{Moments} 
\label{sec:app_moments} 

Henceforth, we provide the moments for different RBFs. 
The one-dimensional case is discussed in \cref{sub:app_moments_1d}, while two-dimensional moments are derived in \cref{sub:app_moments_2d}.

\subsection{One-Dimensional Moments}
\label{sub:app_moments_1d}

Let us consider the one-dimensional case of $\Omega = [a,b]$ and distinct data points $x_1,\dots,x_N \in [a,b]$.

\subsubsection{Gaussian RBF} 

For $\varphi(r) = \exp( - \varepsilon^2 r^2 )$, the moment of the translated Gaussian RBF, 
\begin{equation}\label{eq:moment_1d_G}
	m_n = m(\varepsilon,x_n,a,b) = \int_a^b \exp( - \varepsilon^2 | x - x_n |^2 ) \intd x,
\end{equation} 
is given by 
\begin{equation} 
	m_n = \frac{\sqrt{\pi}}{2 \varepsilon} \left[ \mathrm{erf}( \varepsilon(b-x_n) ) - \mathrm{erf}( \varepsilon(a-x_n) ) \right].
\end{equation} 
Here, $\mathrm{erf}(x) = 2/\sqrt{\pi} \int_0^x \exp( -t^2 ) \intd t$ denotes the usual \emph{error function}, \cite[Section 7.2]{dlmf2021digital}.

\subsubsection{Polyharmonic Splines} 

For $\varphi(r) = r^k$ with odd $k \in \N$, the moment of the translated PHS, 
\begin{equation} 
	m_n = m(x_n,a,b) = \int_a^b \varphi( x - x_n ) \intd x,
\end{equation} 
is given by 
\begin{equation} 
	m_n = \frac{1}{k+1} \left[ (a-x_n)^{k+1} + (b-x_n)^{k+1} \right], 
	\quad n=1,2,\dots,N.
\end{equation} 
For $\varphi(r) = r^k \log r$ with even $k \in \N$, on the other hand, we have 
\begin{equation} 
	m_n 
		= (x_n - a)^{k+1} \left[ \frac{\log( x_n - a )}{k+1} - \frac{1}{(k+1)^2} \right] 
		+ (b - x_n)^{k+1} \left[ \frac{\log( b - x_n )}{k+1} - \frac{1}{(k+1)^2} \right].
\end{equation} 
Note that for $x_n = a$ the first term is zero, while for $x_n = b$ the second term is zero.

\subsection{Two-Dimensional Moments}
\label{sub:app_moments_2d} 

Here, we consider the two-dimensional case, where the domain is given by a rectangular of the form $\Omega = [a,b] \times [c,d]$.

\subsubsection{Gaussian RBF}

For $\varphi(r) = \exp( - \varepsilon^2 r^2 )$, the two-dimensional moments can be written as products of one-dimensional moments. 
In fact, we have 
\begin{equation} 
	\int_a^b \int_c^d \exp( - \varepsilon^2 \|(x-x_n,y-y_n\|_2^2 ) 
		=  m(\varepsilon,x_n,a,b) \cdot m(\varepsilon,y_n,c,d).
\end{equation}
Here, the multiplicands on the right-hand side are the one-dimensional moments from \cref{eq:moment_1d_G}.

\subsubsection{Polyharmonic Splines and Other RBFs} 

If it is not possible to trace the two-dimensional moments back to the one-dimensional ones, we are in need of another approach. 
This is, for instance, the case for PHS. 
We start by noting that for a data points $(x_n,y_n) \in [a,b] \times [c,d]$ the corresponding moment can be rewritten as follows: 
\begin{equation} 
	m(x_n,y_n) 
		= \int_{a}^b \int_{c}^d \varphi( \| (x-x_n,y-y_n)^T \|_2 ) \intd y \intd x 
		= \int_{\tilde{a}}^{\tilde{b}} \int_{\tilde{c}}^{\tilde{d}} \varphi( \| (x,y)^T \|_2 ) \intd y \intd x
\end{equation}
with translated boundaries $\tilde{a} = a - x_n$, $\tilde{b} = b - x_n$, $\tilde{c} = c - y_n$, and $\tilde{d} = d - y_n$. 
We are not aware of an explicit formula for such integrals for most popular RBFs readily available from the literature.  
That said, such formulas were derived in \cite{reeger2016numericalA,reeger2016numericalB,reeger2018numerical} (also see \cite[Chapter 2.3]{watts2016radial}) for the integral of $\varphi$ over a \emph{right triangle} with vertices $(0,0)^T$, $(\alpha,0)^T$, and $(\alpha,\beta)^T$. 
Assuming $\tilde{a} < 0 < \tilde{b}$ and $\tilde{c} < 0 < \tilde{d}$, we therefore partition the shifted domain ${\tilde{\Omega} = [\tilde{a},\tilde{b}] \times [\tilde{c},\tilde{d}]}$ into eight right triangles. 
Denoting the corresponding integrals by $I_1, \dots, I_8$, the moment $m(x_n,y_n)$ correspond to the sum of these integrals. 
The procedure is illustrated in \cref{fig:moments}. 

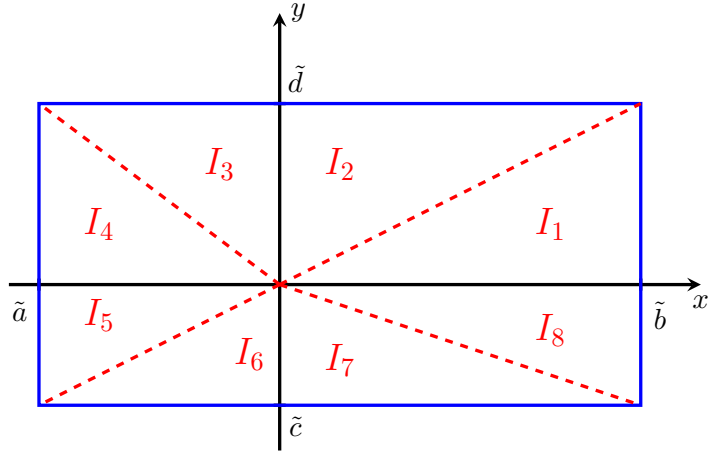
\begin{figure}[tb]%{0.45\textwidth}
	\centering 
  	\begin{tikzpicture}[domain = -6.5:6.5, scale=0.8, line width=1.25pt]

			% x- and y-axis 
			\draw[->,>=stealth] (-4.5,0) -- (7,0) node[below] {$x$};
    			\draw[->,>=stealth] (0,-2.75) -- (0,4.5) node[right] {$y$};
			% labels 
			\draw (-4,0.1) -- (-4,-0.1) node [below] {$\tilde{a}$ \ \ \ \ };
			\draw (6,0.1) -- (6,-0.1) node [below] {\ \ \ \ $\tilde{b}$};
			\draw (-0.1,-2) -- (0.1,-2) node [below] {\ \ $\tilde{c}$};
			\draw (-0.1,3) -- (0.1,3) node [above] {\ \ $\tilde{d}$};

			% Rectangle 
			\draw[blue] (-4,-2) rectangle (6,3);
			
			% lines for triangles
			\draw[red,dashed] (0,0) -- (6,3) {};
			\draw[red,dashed] (0,0) -- (-4,3) {};
			\draw[red,dashed] (0,0) -- (-4,-2) {};
			\draw[red,dashed] (0,0) -- (6,-2) {};
			
			% label for triangles 
			\draw[red] (4.5,1) node {\Large $I_1$};
			\draw[red] (1,2) node {\Large $I_2$};
			\draw[red] (-1,2) node {\Large $I_3$};
			\draw[red] (-3,1) node {\Large $I_4$};
			\draw[red] (-3,-0.5) node {\Large $I_5$};
			\draw[red] (-0.5,-1.15) node {\Large $I_6$};
			\draw[red] (1,-1.25) node {\Large $I_7$};
			\draw[red] (4.5,-0.75) node {\Large $I_8$};

	\end{tikzpicture}
  	\caption{Illustration of how the moments can be computed on a rectangle in two dimensions}
  	\label{fig:moments}
\end{figure}

The special cases where one (or two) of the edges of the rectangle align with one of the axes can be treated similarly. 
However, in this case, a smaller subset of the triangles is considered. 
We leave the details to the reader, and note the following formula for the weights: 
\begin{equation} 
\begin{aligned}
	m(x_n,y_n) 
		& = \left[ 1 - \delta_0\left(\tilde{b} \tilde{d}\right) \right] \left( I_1 + I_2 \right) 
		+ \left[ 1 - \delta_0\left(\tilde{a} \tilde{d}\right) \right] \left( I_3 + I_4 \right) \\
		& + \left[ 1 - \delta_0\left(\tilde{a} \tilde{c}\right) \right] \left( I_5 + I_6 \right) 
		+ \left[ 1 - \delta_0\left(\tilde{b} \tilde{c}\right) \right] \left( I_7 + I_8 \right) 
\end{aligned}
\end{equation} 
Here, $\delta_0$ denotes the usual Kronecker delta defined as $\delta_0(x) = 1$ if $x = 0$ and $\delta_0(x) = 0$ if $x \neq 0$. 
The above formula holds for general $\tilde{a}$, $\tilde{b}$, $\tilde{c}$, and $\tilde{d}$. 
Note that all the right triangles can be rotated or mirrored in a way that yields a corresponding integral of the form 
\begin{equation}\label{eq:refInt} 
	I_{\text{ref}}(\alpha,\beta) 
		= \int_0^{\alpha} \int_0^{\frac{\beta}{\alpha}x} \varphi( \| (x,y)^T \|_2 ) \intd y \intd x.
\end{equation} 
More precisely, we have 
\begin{equation}
\begin{alignedat}{4}
	& I_1 = I_{\text{ref}}(\tilde{b},\tilde{d}), \quad 
	&& I_2 = I_{\text{ref}}(\tilde{d},\tilde{b}), \quad  
	&& I_3 = I_{\text{ref}}(\tilde{d},-\tilde{a}), \quad 
	&& I_4 = I_{\text{ref}}(-\tilde{a},\tilde{d}), \\ 
	& I_5 = I_{\text{ref}}(-\tilde{a},-\tilde{c}), \quad 
	&& I_6 = I_{\text{ref}}(-\tilde{c},-\tilde{a}), \quad 
	&& I_7 = I_{\text{ref}}(-\tilde{c},\tilde{b}), \quad 
	&& I_8 = I_{\text{ref}}(\tilde{b},-\tilde{c}).
\end{alignedat} 
\end{equation}
Finally, explicit formulas of the reference integral $I_{\text{ref}}(\alpha,\beta)$ over the right triangle with vertices $(0,0)^T$, $(\alpha,0)^T$, and $(\alpha,\beta)^T$ for some PHS can be found in \cref{tab:moments}. 
Similar formulas are also available, for instance, for Gaussian, multiquadric and inverse multiquadric RBFs. 

\begin{table}[t]
  \centering 
  \renewcommand{\arraystretch}{1.5}
  \begin{tabular}{c|c}
  	%\toprule
    $\varphi(r)$ & $I_{\text{ref}}(\alpha,\beta)$ \\ 
    \midrule 
    $r^2 \log r$ & $\frac{\alpha}{144} \left[ 24\alpha^3 \arctan\left( \beta/\alpha \right) + 6 \beta (3\alpha^2 + \beta^2) \log( \alpha^2 + \beta^2 ) - 33\alpha^2\beta - 7\beta^3 \right]$ \\ 
    $r^3$ & $\frac{\alpha}{40} \left[ 3 \alpha^4 \arcsinh\left( \beta/\alpha \right) + \beta (5\alpha^2 + 2 \beta^2) \sqrt{ \alpha^2 + \beta^2} \right]$ \\ 
    $r^5$ & $\frac{\alpha}{336} \left[ 15 \alpha^6 \arcsinh\left( \beta/\alpha \right) + \beta (33\alpha^4 + 26\alpha^2\beta^2 + 8 \beta^4) \sqrt{ \alpha^2 + \beta^2} \right]$ \\ 
    $r^7$ & $\frac{\alpha}{3346} \left[ 105 \alpha^8 \arcsinh\left( \beta/\alpha \right) + \beta (279\alpha^6 + 326\alpha^4\beta^2 + 200\alpha^2\beta^4 + 48 \beta^6) \sqrt{ \alpha^2 + \beta^2} \right]$ \\ 
    %\bottomrule
  \end{tabular} 
  \caption{The reference integral $I_{\text{ref}}(\alpha,\beta)$---see \cref{eq:refInt}---for some PHS}
  \label{tab:moments}
\end{table} 

We note that the approach presented above is similar to the one in \cite{sommariva2006numerical}, where the domain $\Omega = [-1,1]^2$ was considered. 
Later, the same authors extended their findings to simple polygons \cite{sommariva2006meshless} using the Gauss--Grenn theorem.  
Also see the recent work \cite{sommariva2021rbf}, addressing polygonal regions that may be nonconvex or even multiply connected, and references therein. 
It would be of interest to see if these approaches also carry over to computing products of RBFs corresponding to different centers or products of RBFs and their partial derivatives, again corresponding to different centers. 
Such integrals occur as elements of mass and stiffness matrices in numerical PDEs. 
In particular, they are desired to construct linearly energy stable (global) RBF methods for hyperbolic conservation laws \cite{glaubitz2020shock,glaubitz2021stabilizing,glaubitz2021towards}.  

\bibliographystyle{siamplain}
\bibliography{literature}

\end{document}